\newcommand*{\defeq}{\mathrel{\vcenter{\baselineskip0.5ex \lineskiplimit0pt\hbox{\scriptsize.}\hbox{\scriptsize.}}}=}
\newtheorem{thm}{Theorem}[section]
\newtheorem{cor}[thm]{Corollary}
\newtheorem{prop}[thm]{Proposition}
\newtheorem{lem}[thm]{Lemma}
\theoremstyle{definition}
\newtheorem{defi}[thm]{Definition}
\theoremstyle{remark}
\newtheorem*{rem}{Remark}
\newtheorem*{ack}{Acknowledgements}
\newtheoremstyle{component}{}{}{}{}{\itshape}{.}{.5em}{\thmnote{#3}#1}
\theoremstyle{component}
\newtheorem*{component}{}
\def\PSL{\operatorname{PSL}}
\renewenvironment{proof}[1][\proofname]{{\noindent \it #1. }}{\qed}
\begin{document}

\title{Figure eight geodesics on 2-orbifolds}
\author{Robert Suzzi Valli}
\address{Department of Mathematics\\
Manhattan College\\
Riverdale, NY}
\email{robert.suzzivalli@manhattan.edu}
\keywords{hyperbolic orbifold, closed geodesic, triangle group}
\subjclass[2010]{57R18 Topology and geometry of orbifolds}

\maketitle
\begin{abstract}
It is known that the shortest non-simple closed geodesic on an orientable hyperbolic 2-orbifold passes through an orbifold point of the orbifold \cite{TN}.  This raises questions about minimal length non-simple closed geodesics disjoint from the orbifold points.  Here we explore once self-intersecting closed geodesics disjoint from the orbifold points of the orbifold, called \textit{figure eight geodesics}.  Using fundamental domains and basic hyperbolic trigonometry we identify and classify all figure eight geodesics on triangle group orbifolds.  This classification allows us to find the shortest figure eight geodesic on a triangle group orbifold, namely the unique one on the (3,3,4)-triangle group orbifold.  We then show that this same curve is the shortest figure eight geodesic on a hyperbolic 2-orbifold without orbifold points of order two.
\end{abstract}

\section{Introduction and results}
The shortest non-simple closed geodesic on a hyperbolic 2-orbifold is the geodesic $\gamma_*$ pictured in Figure \ref{(2,3,7)} which lies on the $(2,3,7)$-triangle group orbifold, denoted by $\mathcal{O}(2,3,7)$ (\cite{TN},\cite{CPNP},\cite{RV}).  The hyperbolic length of $\gamma_*$ is $$\ell_*=2\cosh^{-1}\left[\cos\left(\frac{2\pi}{7}\right)+\frac{1}{2}\right] =0.98399\dots.$$  Note that  $\gamma_*$ runs from the order two orbifold point around the orbifold to the order two orbifold point, and then back along that same path in the reverse direction.  It is natural then to find the shortest non-simple closed geodesics on hyperbolic 2-orbifolds which are disjoint from the orbifold points.  Here we study once self-intersecting closed geodesics disjoint from the orbifold points, which we call \textit{figure eight geodesics}.  By inspecting the fundamental domains of triangle groups we arrive at a classification of all figure eight geodesics on triangle group orbifolds (Theorem \ref{classthm}).  It follows that with the exception of $\mathcal{O}(2,3,r)$ and $\mathcal{O}(2,4,r)$, all triangle group orbifolds contain a figure eight geodesic (Corollary \ref{exceptg}).  With this classification at our disposal, we arrive at the following theorem. 

\begin{thm}
\label{shortfig8tg}
The shortest (unoriented) figure eight geodesic on any triangle group orbifold is the unique one on $\mathcal{O}(3,3,4)$ which bounds the orbifold points of order three and has length $$2\cosh^{-1}\left(\frac{1+\sqrt{2}}{2}\right)=1.26595\dots.$$
\end{thm}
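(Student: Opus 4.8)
The plan is to turn Theorem~\ref{classthm} into an explicit length function and then minimize it, using Corollary~\ref{exceptg} to eliminate the dangerous cases. First I would read off from the classification the length of a figure eight geodesic whose two lobes enclose the orbifold points of orders $a$ and $b$ (so that the third point has order $c$). Such a curve represents the product of the two elliptic generators centered at those points, so computing its translation length reduces to a trace computation in which the distance between the two centers is supplied by the dual law of cosines for the triangle with angles $\pi/a,\pi/b,\pi/c$. I expect this to yield
\[
\ell(a,b,c)=2\cosh^{-1}\!\left(2\cos\tfrac{\pi}{a}\cos\tfrac{\pi}{b}+\cos\tfrac{\pi}{c}\right),
\]
which is consistent with the asserted value at $(a,b,c)=(3,3,4)$, since $2\cos\tfrac{\pi}{3}\cos\tfrac{\pi}{3}+\cos\tfrac{\pi}{4}=\tfrac12+\tfrac{\sqrt2}{2}=\tfrac{1+\sqrt2}{2}$. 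Minimizing $\ell$ is then the same as minimizing $f(a,b,c)\defeq 2\cos\tfrac{\pi}{a}\cos\tfrac{\pi}{b}+\cos\tfrac{\pi}{c}$ over admissible triples, subject to the hyperbolicity threshold $f>1$.

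Two structural observations cut the problem down. First, no figure eight can enclose an orbifold point of order two: if one of $a,b$ equals $2$ then $\cos\tfrac{\pi}{2}=0$ collapses the bound to $f=\cos\tfrac{\pi}{c}<1$, so the corresponding element is not hyperbolic and carries no geodesic. Hence whenever the orbifold has an order-two point, the two lobes must enclose the remaining two points and that point plays the role of $c=2$. Second, $f$ is strictly increasing in each of $a,b,c$, because $\cos\tfrac{\pi}{n}$ increases to $1$ as $n$ grows. Consequently the shortest figure eight geodesics occur on the orbifolds of smallest orders, so it suffices to examine finitely many minimal triangle types; every larger triple is eliminated by monotonicity.

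Now I would invoke Corollary~\ref{exceptg} to discard $\mathcal{O}(2,3,r)$ and $\mathcal{O}(2,4,r)$ outright. Among the surviving orbifolds carrying an order-two point, the two free orders are at least $5$, so the minimal case is $\mathcal{O}(2,5,5)$, with $f=2\cos^2\tfrac{\pi}{5}=1.309\dots$. Among orbifolds with no order-two point the orders are all at least $3$, and since $\mathcal{O}(3,3,3)$ is not hyperbolic the smallest is $\mathcal{O}(3,3,4)$, whose figure eight bounding the two order-three points gives $f=\tfrac{1+\sqrt2}{2}=1.207\dots$; monotonicity makes every other order-$\geq 3$ triple at least this large. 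Since $1.207\dots<1.309\dots$, the global minimum of $f$ is attained only at $\mathcal{O}(3,3,4)$, with length $2\cosh^{-1}\!\left(\tfrac{1+\sqrt2}{2}\right)$. Uniqueness then follows from the classification, which singles out one realizable figure eight on $\mathcal{O}(3,3,4)$; here one should also confirm that the alternative lobe-choice enclosing an order-three point and the order-four point is not a figure eight, since it would otherwise return the same numerical value.

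The real difficulty lives upstream, in the order-two cases, and is exactly what Corollary~\ref{exceptg} resolves. The near-miss is $\mathcal{O}(2,4,5)$: here $f=\sqrt2\cos\tfrac{\pi}{5}=1.144\dots<1.207\dots$, so a figure eight on this orbifold, if it existed, would be strictly shorter than the claimed minimizer. The theorem therefore rests on the fact, established by the fundamental-domain analysis behind Theorem~\ref{classthm} and Corollary~\ref{exceptg}, that the candidate curves on $\mathcal{O}(2,3,r)$ and $\mathcal{O}(2,4,r)$ run through the order-two point and so fail to be disjoint from the orbifold points. Granting that input, the remainder is just the monotonicity of $f$ together with the finite comparison above.
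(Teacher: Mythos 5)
Your argument is correct and essentially the paper's own: both reduce to the trace formula $|tr|=2\left[2\cos\left(\frac{\pi}{a}\right)\cos\left(\frac{\pi}{b}\right)+\cos\left(\frac{\pi}{c}\right)\right]$ for the candidate elements singled out by Theorem \ref{classthm}, use monotonicity of $\cos\left(\frac{\pi}{n}\right)$ to reduce to the minimal triples $(2,5,5)$ and $(3,3,4)$, and compare, the only organizational difference being that the paper handles the puncture cases in separate lemmas (Lemmas \ref{lem2}--\ref{lem4}) where you absorb them into the same monotonicity argument via $\cos\left(\frac{\pi}{\infty}\right)=1$. Your explicit identification of $\mathcal{O}(2,4,5)$ as the near-miss whose hyperbolic element $BC^{-1}$ has smaller trace but projects to the barbell path through the order-two and order-four points rather than a figure eight is left implicit in the paper and is a worthwhile observation.
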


From this result we look to generalize to finding the shortest figure eight geodesic on a hyperbolic 2-orbifold without orbifold points of order two.  Observing that a figure eight geodesic on such a 2-orbifold lies on a type of generalized pair of pants (Proposition \ref{fig8pants}), coupled with the fact that interior lengths decrease as boundary components are shrunk down to punctures  \cite{WT}, we can reduce the problem to looking at triangle group orbifolds.  As this was already established, we have the following. 

\begin{thm}
\label{shortfig8ho}
The shortest (unoriented) figure eight geodesic on a hyperbolic 2-orbifold without orbifold points of order two is the unique one on $\mathcal{O}(3,3,4)$ which bounds the orbifold points of order three and has length $$2\cosh^{-1}\left(\frac{1+\sqrt{2}}{2}\right)=1.26595\dots.$$
\end{thm}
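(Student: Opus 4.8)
The plan is to reduce the general statement to the case of triangle group orbifolds, which is already settled by Theorem \ref{shortfig8tg}, using the pair of pants structure of Proposition \ref{fig8pants} together with the monotonicity of interior lengths under boundary pinching from \cite{WT}. Let $\mathcal O$ be any hyperbolic $2$-orbifold without orbifold points of order two, and let $\gamma$ be a figure eight geodesic on it. By Proposition \ref{fig8pants}, $\gamma$ is carried by a generalized pair of pants $P\subset\mathcal O$; realizing $P$ with geodesic boundary as the convex core of the rank-two subgroup generated by the two loops of $\gamma$, the curve $\gamma$ is an interior geodesic of $P$ whose length equals its length $\ell(\gamma)$ in $\mathcal O$. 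Each of the three ends of $P$ is then a geodesic boundary of positive length, a cusp, or a cone point; since $\mathcal O$ has no order-two points, every cone-point end has order at least $3$.

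First I would shrink every positive-length geodesic boundary of $P$ down to a cusp, leaving the cone-point ends untouched. By the monotonicity result of \cite{WT}, the length of the interior geodesic $\gamma$ does not increase under this deformation, and in the limit $\gamma$ persists as a figure eight geodesic (by continuity it remains once self-intersecting and disjoint from the cone points). Writing $P'$ for the resulting complete hyperbolic orbifold, every end of $P'$ is now a cusp or a cone point of order at least $3$, so $P'$ is a sphere with exactly three such ends, i.e. a triangle group orbifold $\mathcal O(p,q,r)$ with $p,q,r\in\{3,4,5,\dots,\infty\}$ (where $\infty$ records a cusp) and no orbifold points of order two. The Euclidean case $(3,3,3)$ does not occur, since $P'$ inherits a hyperbolic structure from $P\subset\mathcal O$.

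With $P'$ a triangle group orbifold, Theorem \ref{shortfig8tg} (which classifies and minimizes figure eight lengths over all triangle group orbifolds, the cusped ones included) gives $\ell_{P'}(\gamma)\ge \ell_{(3,3,4)}$, where $\ell_{(3,3,4)}=2\cosh^{-1}\!\left(\tfrac{1+\sqrt2}{2}\right)$. Combining the two steps yields the chain
\[
\ell(\gamma)=\ell_P(\gamma)\ \ge\ \ell_{P'}(\gamma)\ \ge\ \ell_{(3,3,4)},
\]
so no figure eight on $\mathcal O$ is shorter than the one on $\mathcal O(3,3,4)$. For the uniqueness clause, equality forces both inequalities to be equalities: strict monotonicity in \cite{WT} forces $P$ to have no positive-length boundary, whence $P=P'=\mathcal O$ is itself a closed triangle group orbifold, and the uniqueness in Theorem \ref{shortfig8tg} then forces $\mathcal O=\mathcal O(3,3,4)$ with $\gamma$ the distinguished figure eight bounding the two order-three points.

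The main obstacle I anticipate lies in the two delicate points of the reduction: justifying that the pinching monotonicity of \cite{WT} applies to the \emph{non-simple} figure eight geodesic rather than only to simple closed geodesics, and verifying that $\gamma$ varies continuously and survives as a genuine figure eight geodesic in the cusped limit $P'$ instead of degenerating or acquiring extra self-intersections. Once these are secured—together with the verification that Theorem \ref{shortfig8tg} indeed covers the triangle group orbifolds with cusps—the remainder of the argument is purely formal.
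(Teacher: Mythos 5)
Your proposal is correct and follows essentially the same route as the paper: Proposition \ref{fig8pants} places the figure eight in a generalized pair of pants, Thurston's monotonicity under pinching boundary components to cusps reduces the minimization to triangle group orbifolds, and Theorem \ref{shortfig8tg} (whose proof via Lemmas \ref{lem1}--\ref{lem4} does cover the cusped cases) finishes the argument. The paper's version is terser and does not elaborate the delicate points you flag, but the logical skeleton is identical.
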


\begin{figure}
\centering
\begin{overpic}[scale=.4]{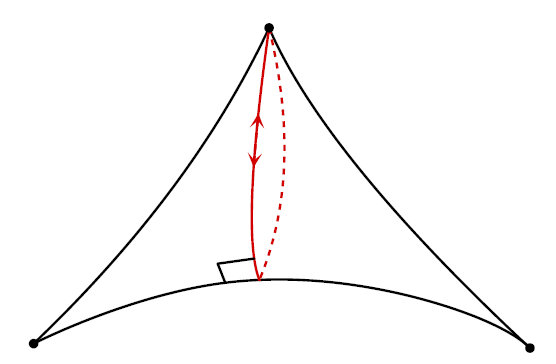}
\put(47,63){2}
\put(98,0){3}
\put(0.7,0){7}
\put(39,33){$\gamma_*$}
\end{overpic}
\caption{Shortest non-simple closed geodesic on a 2-orbifold}
\label{(2,3,7)}
\end{figure}

\indent In recent years, Philippe has written a series of related papers (\cite{EP1}, \cite{EP2}, \cite{EP3}) in which he discusses the length spectrum of triangle groups.  In \cite{EP1} he determined the length of the shortest non-simple closed geodesics on triangle group orbifolds without an orbifold point of order two, and found that the corresponding geodesics are in the free homotopy class of figure eight loops.  It is left open to what the closed geodesics look like, and whether or not they pass through an orbifold point.  These considerations are handled in this note in Theorem \ref{classthm}.\\
\indent  For the basics on hyperbolic geometry, we refer the reader to Beardon \cite{AB1}, Buser \cite{PB}, and Maskit \cite{BM}.  Section 2 gives the necessary development of orbifolds, which is well-known to the experts.  We direct the reader to \cite{JP}, \cite{CHK},\cite{PS} and \cite{TWZ} for further consideration of orbifolds.  Section 3 gives a presentation of triangle groups which is used to prove the classification theorem of figure eight geodesics on triangle group orbifolds (Theorem \ref{classthm}) and Theorem \ref{shortfig8tg}, both of which are contained in Section 4.  Finally, in Section 5 we prove Theorem \ref{shortfig8ho}.

\section{Hyperbolic Orbifolds}
\subsection{Basics}
Let $\mathbb{H}$ denote the hyperbolic plane and suppose $\Gamma$ is a Fuchsian group, that is, a discrete subgroup of the group of orientation-preserving isometries of the hyperbolic plane, which we can identify with the matrix group PSL$(2,\mathbb{R})=$ SL$(2,\mathbb{R})/\{\pm I\}$.  The group $\Gamma$ induces the continuous, open projection $$\pi:\mathbb{H}\rightarrow\mathbb{H}/\Gamma,$$ where $\pi(z)=\Gamma(z)$, the orbit of $z\in\mathbb{H}$ under $\Gamma$.  We are interested in the case where $\Gamma$ contains finite-order elliptic elements.  In this case the map $\pi$ is a branched covering map and the quotient $\mathbb{H}/\Gamma$ is a 2-dimensional orientable hyperbolic orbifold.  From now on, for simplicity, we will write 2-orbifold, keeping in mind that the orbifolds we consider here will always be orientable and hyperbolic.\\
\indent If $g\in\Gamma$ is an elliptic element with fixed point $\tilde{x}\in\mathbb{H}$, then the projection $\pi(\tilde{x})$ is an \textit{orbifold point} on $\mathbb{H}/\Gamma$ whose order is the cardinality of $\Gamma_{\tilde{x}}=\{f\in\Gamma\mid f(\tilde{x})=\tilde{x}\}$.  If $\Sigma$ represents the set of orbifold points on the 2-orbifold $\mathbb{H}/\Gamma$, then the branched covering map $\pi$ possesses the following properties:
\begin{itemize}
\item[(i)]
$\pi :\mathbb{H} - \pi^{-1}(\Sigma)\rightarrow X - \Sigma$ is a covering map, and
\item[(ii)]
for all $\tilde{x}\in\pi^{-1}(\Sigma)$, there exists a hyperbolic disc $D$ centered at $\tilde{x}$ such that $D$ is \textit{precisely invariant} under $\Gamma_{\tilde{x}}$ in $\Gamma$ (that is, for all $g\in\Gamma_{\tilde{x}}$, $g(D)=D$ and for all $g\in\Gamma - \Gamma_{\tilde{x}}$, $g(D)\cap D =\varnothing$).
\end{itemize}

\indent Let $\Gamma$ be a finitely generated Fuchsian group. It follows that $\Gamma$ will have finitely many conjugacy classes of maximal parabolic, elliptic, and boundary hyperbolic cyclic subgroups.  Suppose $\mathbb{H}/\Gamma$ has genus $g$, and $\Gamma$ contains $c$ conjugacy classes of maximal elliptic cyclic subgroups of orders $m_1,\dots,m_c\geq2$, $n$ conjugacy classes of maximal parabolic cyclic subgroups, and $b$ conjugacy classes of maximal boundary hyperbolic cyclic subgroups.  Then, we say $\Gamma$ has \textit{signature} $(g:m_1,\dots,m_c;n;b)$.  The import is that if we suppose $\Gamma$ has this signature, then the resulting 2-orbifold $\mathbb{H}/\Gamma$ has genus $g$, $c$ orbifold points of orders $m_1,\dots,m_c$, $n$ punctures, and $b$ boundary components.\\

\subsection{Paths and Homotopy}
\label{pathshomotopy}
The goal of this section is to establish an association between homotopy classes of loops on $\mathbb{H}/\Gamma$ and elements of the Fuchsian group $\Gamma$. The first step is to define the notion of a path on $\mathbb{H}/\Gamma$.  It will be crucial that we have the uniqueness of path liftings property from covering space theory.\\  
\indent A point on $\mathbb{H}/\Gamma$ which is not an orbifold point will be called a \textit{regular point}.  Fix a regular point $p\in\mathbb{H}/\Gamma$ and some lift $\widetilde{p}\in\mathbb{H}$ of $p$ to be base points.  Note that we cannot simply deem a continuous map $\alpha:[0,1]\rightarrow\mathbb{H}/\Gamma$, $\alpha(0)=p$, to be a path on $\mathbb{H}/\Gamma$ since if the path passes through an orbifold point we would not have a unique lift of $\alpha$ to $\mathbb{H}$ starting at $\widetilde{p}$.  For example, in Figure \ref{orbpath} we have the group $\Gamma=\langle z\mapsto iz\rangle$ acting on the unit disk model of $\mathbb{H}$.  The path $\alpha$ passes through the orbifold point of order four and we see the four possible continuous lifts to $\mathbb{H}$.  In general, if $\alpha$ passes through a single orbifold point of order $n$, then there are $n$ choices of a continuous lift of $\alpha$ starting at $\widetilde{p}$.

\begin{figure}
\centering
\begin{overpic}[scale=.5]{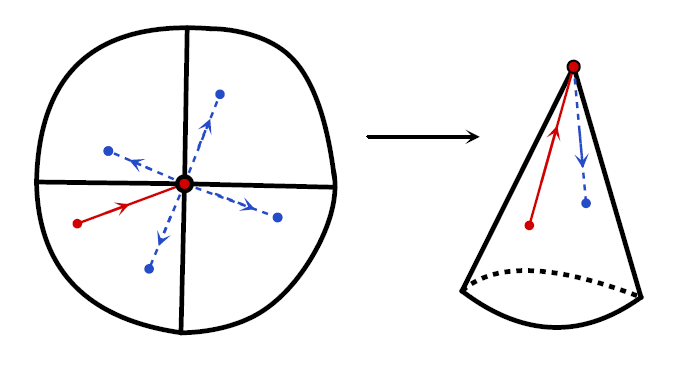}
\put(5,49){$\mathbb{H}$}
\put(11,17){$\widetilde{p}$}
\put(60,36){$\pi$}
\put(94,49){$\mathbb{H}/\Gamma$}
\put(76,17){$p$}
\put(80,25){$\alpha$}
\end{overpic}
\caption{A path passing through an orbifold point of order four and its lifts}
\label{orbpath}
\end{figure}

In order to guarantee uniqueness of path liftings we make the following definition.

\begin{defi}
A \textit{path} in $\mathbb{H}/\Gamma$ consists of a pair $(\alpha,\widetilde{\alpha})$, in which
\begin{itemize}
 \item[(1)]
 $\alpha:[0,1]\rightarrow\mathbb{H}/\Gamma$, is a continuous map with $\alpha(0)=p$, and there are at most finitely many $t\in[0,1]$ such that $\alpha(t)$ is an orbifold point; and
 \item[(2)]
 $\widetilde{\alpha}:[0,1]\rightarrow\mathbb{H}$ is some continuous lift of $\alpha$ to $\mathbb{H}$ with $\widetilde{\alpha}(0)=\widetilde{p}$.
 \end{itemize}
 If the context is clear we will denote the pair as simply $\alpha$.
\end{defi}

\begin{rem}
If $\alpha$ does not pass through an orbifold point then $\widetilde{\alpha}$ is uniquely defined since the map $\pi$ is a covering map in the complement of the elliptic fixed points, and thus has the uniqueness of path liftings property.
\end{rem}

\begin{defi}
Let $\alpha$ and $\beta$ be two paths in $\mathbb{H}/\Gamma$.  We say that $\alpha$ and $\beta$ are \textit{$\mathbb{H}/\Gamma$-homotopic}, denoted $\alpha\sim\beta$, if their respective lifts $\widetilde{\alpha}$ and $\widetilde{\beta}$ have the same end point.
\end{defi}

Ultimately $\mathbb{H}/\Gamma$-homotopy is the ``usual" notion of homotopy in the complement of the orbifold points, but we need to take a closer look at what happens to a path as it is $\mathbb{H}/\Gamma$-homotoped passed an orbifold point.  We consider an example for an elliptic isometry of order three in Figure \ref{elliptic3}, but it generalizes to an elliptic isometry of any finite order (see \cite{CG}).  As the definition indicates homotopic paths in $\mathbb{H}$ project to $\mathbb{H}/\Gamma$-homotopic paths in $\mathbb{H}/\Gamma$.  Thus, the top row of Figure \ref{elliptic3} represents three homotopic paths in $\mathbb{H}$ with end points on the boundary of a hyperbolic disc centered at the origin, which we normalize to be the fixed point of an elliptic isometry of order three.  Below each disc is an aerial view of the corresponding $\mathbb{H}/\Gamma$-homotopic paths on the resulting cone.  Hence, we see that a path going around the orbifold point of order three once in one direction is $\mathbb{H}/\Gamma$-homotopic to a path going around the orbifold point twice in the opposite direction.  This observation is closely related to the order of the orbifold point and will become more transparent as we establish the relationship between homotopy classes of loops and elements of the Fuchsian group.\\
\indent Our primary concern is closed paths.  A \textit{loop} in $\mathbb{H}/\Gamma$ based at $p$ is a path $\alpha$ in $\mathbb{H}/\Gamma$ such that $\alpha(0)=\alpha(1)=p$.

\begin{figure}
\centering
\begin{overpic}[scale=.4]{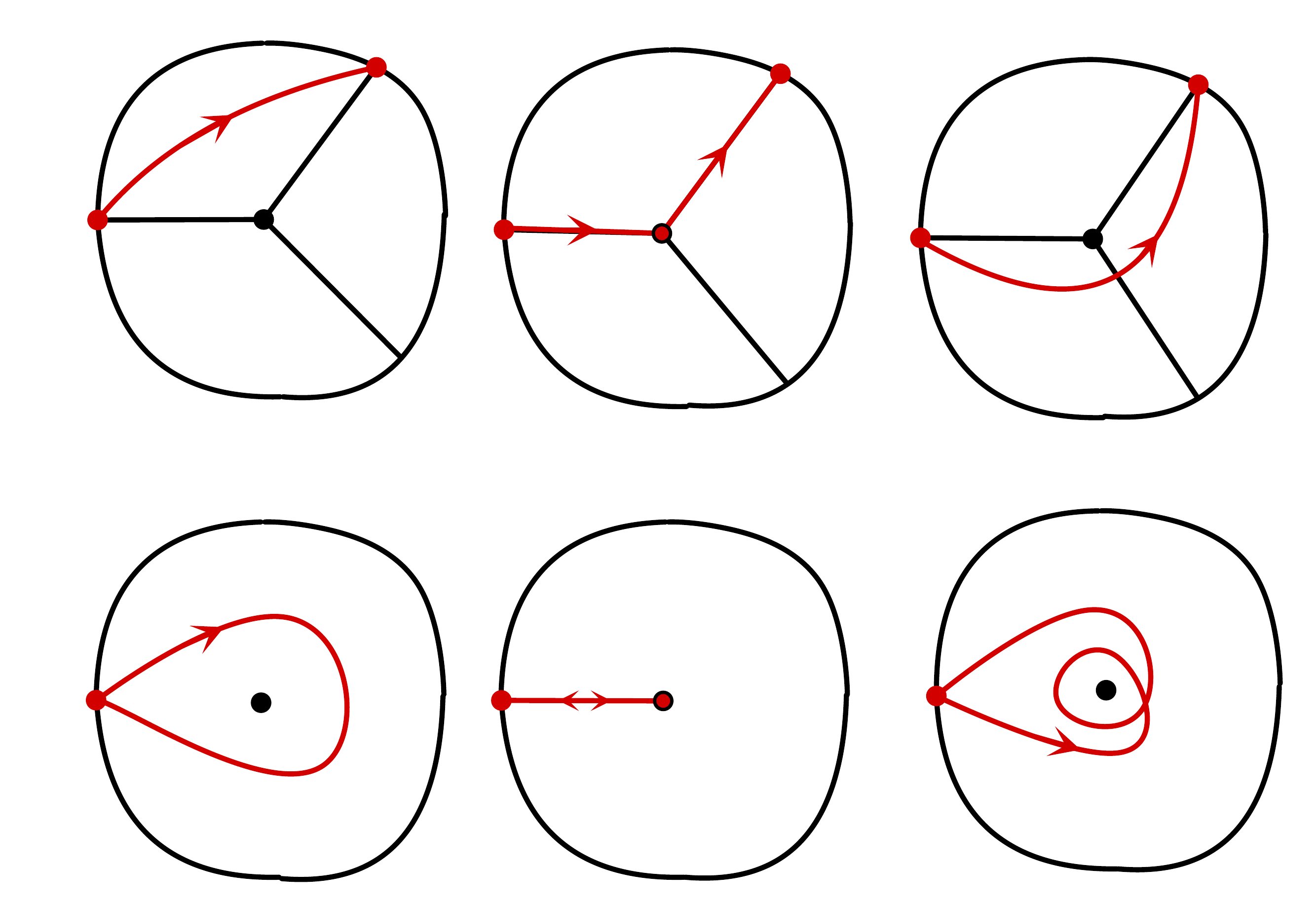}
\put(2,65){$\mathbb{H}$}
\put(2,29){$\mathbb{H}/\Gamma$}
\put(19,33){$\downarrow$}
\put(49,33){$\downarrow$}
\put(83,33){$\downarrow$}
\put(16,33){$\pi$}
\put(4,52){$\widetilde{p}$}
\put(4,15.5){$p$}
\put(34.2,50){$\sim$}
\put(65.5,50){$\sim$}
\put(34.2,15){$\sim$}
\put(66,15){$\sim$}
\end{overpic}
\caption{$\mathbb{H}/\Gamma$-homotopic paths about an orbifold point of order three}
\label{elliptic3}
\end{figure}

\begin{rem}
If $\alpha$ is a loop based at $p$, then the end point of its lift $\widetilde{\alpha}$ is $g(\widetilde{p})$, for some $g\in\Gamma$.  In fact, $g$ is unique since we chose $p$ to be a regular point.
\end{rem}

$\mathbb{H}/\Gamma$-homotopy induces an equivalence relation on the set of loops in $\mathbb{H}/\Gamma$ based at $p$.  Suppose $(\alpha,\widetilde{\alpha})$ and $(\beta,\widetilde{\beta})$ are two loops in $\mathbb{H}/\Gamma$ based at $p$ and furthermore, suppose the end point of $\widetilde{\alpha}$ is $g(\widetilde{p})$, for some $g\in\Gamma$.  The \textit{concatenation} of $(\alpha,\widetilde{\alpha})$ with $(\beta,\widetilde{\beta})$, denoted $(\alpha,\widetilde{\alpha})*(\beta,\widetilde{\beta})$ is defined to be the loop $(\alpha*\beta,\widetilde{\alpha}*g\widetilde{\beta})$, where

\begin{equation*}
\alpha*\beta=\left\{\begin{array}{rl}
\alpha(2t) & ,  0\leq t\leq 1/2\\
\beta(2t-1) & , 1/2\leq t\leq1\
\end{array}\right.,
\quad
\widetilde{\alpha}*g\widetilde{\beta}=\left\{\begin{array}{rl}
\widetilde{\alpha}(2t) & ,  0\leq t\leq 1/2\\
g\widetilde{\beta}(2t-1) & , 1/2\leq t\leq1\
\end{array}\right..
\end{equation*}

We remark that we have defined concatenation of loops to be performed from left to right, while function composition will be performed from right to left.  Concatenation of loops respects $\mathbb{H}/\Gamma$-homotopy, that is, if $\alpha\sim\alpha^{\prime}$ and $\beta\sim\beta^{\prime}$, then $\alpha*\beta\sim\alpha^{\prime}*\beta^{\prime}$.  Hence we define the \textit{orbifold fundamental group} of $\mathbb{H}/\Gamma$ based at $p$, denoted $\Pi_1(\mathbb{H}/\Gamma,p)$, to be the set of $\mathbb{H}/\Gamma$-homotopy classes of loops based at $p$ under the operation of concatenation.  It can be shown that $\Pi_1(\mathbb{H}/\Gamma,p)$ is indeed a group.  We now come to the main purpose for the above definitions, which has an analagous proof to the 2-manifold case.

\begin{thm}
\label{isomorphthm}
Suppose $\Gamma$ is a Fuchsian group and $\pi:\mathbb{H}\rightarrow\mathbb{H}/\Gamma$ is the projection map.  Fix a regular point $p\in\mathbb{H}/\Gamma$ and a lift $\widetilde{p}\in\pi^{-1}(p)$.  Then $\Pi_1(\mathbb{H}/\Gamma,p)$ is isomorphic to $\Gamma$.
\end{thm}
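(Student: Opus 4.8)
The plan is to construct the isomorphism explicitly as the endpoint (monodromy) map and verify it is a well-defined bijective homomorphism. I would define $\Phi : \Pi_1(\mathbb{H}/\Gamma, p) \to \Gamma$ by sending the class of a loop $(\alpha, \widetilde\alpha)$ to the unique element $g \in \Gamma$ for which $\widetilde\alpha(1) = g(\widetilde p)$. Such a $g$ exists and is unique precisely because $p$ was chosen to be a regular point, so that the stabilizer of $\widetilde p$ in $\Gamma$ is trivial; this is the content of the Remark preceding the statement. Well-definedness of $\Phi$ on $\mathbb{H}/\Gamma$-homotopy classes is then immediate: by the very definition of $\mathbb{H}/\Gamma$-homotopy, two loops are equivalent if and only if their lifts share the same endpoint, hence they determine the same $g$.

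Next I would check that $\Phi$ is a homomorphism, which is where the twisted concatenation in the definition of $\Pi_1$ does its work. Suppose $\Phi([\alpha]) = g$ and $\Phi([\beta]) = h$, so that $\widetilde\alpha(1) = g(\widetilde p)$ and $\widetilde\beta(1) = h(\widetilde p)$. By definition the lift of the concatenation $(\alpha,\widetilde\alpha)*(\beta,\widetilde\beta)$ is $\widetilde\alpha * g\widetilde\beta$, whose endpoint is $g\bigl(\widetilde\beta(1)\bigr) = g\bigl(h(\widetilde p)\bigr) = (gh)(\widetilde p)$. Therefore $\Phi([\alpha]*[\beta]) = gh = \Phi([\alpha])\,\Phi([\beta])$, as required; the $g$-twist applied to $\widetilde\beta$ is exactly what makes the endpoints multiply in the correct order.

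Finally I would establish that $\Phi$ is a bijection. Injectivity is again built into the definition: if $\Phi([\alpha]) = \Phi([\beta])$ then the lifts $\widetilde\alpha$ and $\widetilde\beta$ have the same endpoint, so $\alpha \sim \beta$ and the classes coincide. For surjectivity, given $g \in \Gamma$ I would use that $\mathbb{H}$ is path-connected to choose a path $\widetilde\alpha$ from $\widetilde p$ to $g(\widetilde p)$, taking $\widetilde\alpha$ to be the geodesic segment joining the two points. Projecting via $\pi$ yields $\alpha = \pi\circ\widetilde\alpha$, which is a loop based at $p$ because $\pi$ is $\Gamma$-invariant and hence $\alpha(1) = \pi(g(\widetilde p)) = \pi(\widetilde p) = p$. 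The pair $(\alpha,\widetilde\alpha)$ is a legitimate path, since the compact geodesic segment meets the discrete set of elliptic fixed points in only finitely many points, so $\alpha$ passes through at most finitely many orbifold points; by construction $\Phi([(\alpha,\widetilde\alpha)]) = g$.

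The construction is engineered so that most of the verification is formal, and the genuine points requiring care are the homomorphism computation---where the order of multiplication hinges on the $g$-twist in the concatenation---and the surjectivity argument, where one must confirm that a projected geodesic segment is a valid path in the sense of the Definition, that is, crosses only finitely many orbifold points. The underlying reason everything goes through, exactly as in the $2$-manifold covering-space setting, is the uniqueness of path lifting enforced by the Definition together with the contractibility of $\mathbb{H}$.
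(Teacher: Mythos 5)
Your proof is correct and is exactly the standard monodromy argument that the paper has in mind: the paper omits the proof entirely, remarking only that it is ``analogous to the 2-manifold case,'' and your endpoint map $\Phi$, the $g$-twisted concatenation computation, and the geodesic-segment surjectivity argument supply precisely those details. The only point worth making explicit is why the compact geodesic segment meets the elliptic fixed points in finitely many points (discreteness of the fixed-point set, which follows from proper discontinuity of the $\Gamma$-action), and you do flag this.
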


\subsection{Closed Geodesics}
Let $g$ be a hyperbolic isometry in the Fuchsian group $\Gamma$ with axis $\mathcal{A}_g$.  If $\pi:\mathbb{H}\rightarrow\mathbb{H}/\Gamma$ is the projection map, then $\pi(\mathcal{A}_g)$ is a \textit{closed geodesic} on $\mathbb{H}/\Gamma$.  We remark that in the case that $\pi(\mathcal{A}_g)$ passes through an orbifold point, the resulting geodesic is a piecewise closed geodesic with smoothness lacking at the orbifold points.  For our purposes we will still refer to such phenomena as closed geodesics.  Furthermore, unless otherwise stated we will assume all closed geodesics are primitive.\\
\indent Fix regular points $p$ and $q$, as well as lifts $\widetilde{p}$ and $\widetilde{q}$ of $p$ and $q$, respectively.  Suppose $\alpha$ and $\beta$ are loops on $\mathbb{H}/\Gamma$ where $\alpha$ is based at $p$ and $\beta$ is based at $q$.  We say that $\alpha$ is \textit{freely $\mathbb{H}/\Gamma$-homotopic} to $\beta$ if and only if there exists a path $\delta$ on $\mathbb{H}/\Gamma$ which starts at $p$, ends at $q$, and is disjoint from the orbifold points of $\mathbb{H}/\Gamma$, so that $\alpha$ is $\mathbb{H}/\Gamma$-homotopic to $\delta*\beta*\delta^{-1}$, that is, the end points of the lifts of $\alpha$ and $\delta*\beta*\delta^{-1}$ starting at $\widetilde{p}$ coincide.

\indent A loop $\alpha$ on $\mathbb{H}/\Gamma$ is called \textit{essential} if it does not bound a topological disc on $\mathbb{H}/\Gamma$.

\begin{thm}
\label{uniquegeodthm}
Suppose $\Gamma$ is a Fuchsian group and $\pi:\mathbb{H}\rightarrow\mathbb{H}/\Gamma$ is the projection map.  Fix a regular point $p\in\mathbb{H}/\Gamma$ and a lift $\widetilde{p}\in\pi^{-1}(p)$.  If $\alpha$ is an essential loop on $\mathbb{H}/\Gamma$ based at $p$, then exactly one of the following holds.
\begin{itemize}
\item[(1)]
There exists a unique closed geodesic in the free $\mathbb{H}/\Gamma$-homotopy class of $\alpha$.
\item[(2)]
There is no closed geodesic in the free $\mathbb{H}/\Gamma$-homotopy class of $\alpha$; furthermore, there is a simple loop in the free $\mathbb{H}/\Gamma$-homotopy class of $\alpha$ that bounds a puncture.
\item[(3)]
There is no closed geodesic in the free $\mathbb{H}/\Gamma$-homotopy class of $\alpha$; furthermore, there is a simple loop in the free $\mathbb{H}/\Gamma$-homotopy class of $\alpha$ that bounds an orbifold point.

\end{itemize}
\end{thm}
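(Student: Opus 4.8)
The plan is to transport the question into the Fuchsian group $\Gamma$ by way of Theorem~\ref{isomorphthm}, and then to read off the three alternatives from the classification of nontrivial elements of $\Gamma$ as elliptic, parabolic, or hyperbolic.

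First I would set up the dictionary between free $\mathbb{H}/\Gamma$-homotopy classes of loops and conjugacy classes in $\Gamma$. Under the isomorphism $\Pi_1(\mathbb{H}/\Gamma,p)\cong\Gamma$ the loop $\alpha$ determines the unique $g\in\Gamma$ with $\widetilde{\alpha}(1)=g(\widetilde{p})$, and I would check that two loops are freely $\mathbb{H}/\Gamma$-homotopic exactly when the associated elements are conjugate in $\Gamma$: the connecting path $\delta$ appearing in the definition of free homotopy contributes precisely a conjugating element, so that $\delta*\beta*\delta^{-1}$ represents a conjugate of the element of $\beta$. Since $\alpha$ is essential it is not null-homotopic, hence $g\neq\mathrm{id}$, and $\alpha$ determines a well-defined nontrivial conjugacy class $[g]$. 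As the elliptic/parabolic/hyperbolic trichotomy is a conjugacy invariant (being detected by the trace), exactly one of these three types is attached to the class of $\alpha$; this is what will yield the ``exactly one'' in the statement, once each type is matched with one of (1)--(3).

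Next I would dispatch the three types. If $g$ is hyperbolic with axis $\mathcal{A}_g$, then $\pi(\mathcal{A}_g)$ is a closed geodesic, and I would exhibit a free homotopy from $\alpha$ to $\pi(\mathcal{A}_g)$ by dragging the lift $\widetilde{\alpha}$ along a geodesic segment joining $\widetilde{p}$ to $\mathcal{A}_g$; this gives the existence in (1). For uniqueness, any closed geodesic freely homotopic to $\alpha$ lifts to the axis of some conjugate $hgh^{-1}$, and all of these axes $h(\mathcal{A}_g)$ have the same image $\pi(\mathcal{A}_g)$, so the geodesic is unique. If $g$ is parabolic it has no axis, so no closed geodesic can represent $[g]$; its fixed point $\xi\in\partial\mathbb{H}$ is the parabolic point of a puncture, and the projection of a horocycle at $\xi$ is a simple loop bounding that puncture and freely homotopic to $\alpha$, giving (2). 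If $g$ is elliptic it again has no axis and hence no geodesic representative; its fixed point $\widetilde{x}$ projects to an orbifold point, and the projection of a small circle about $\widetilde{x}$ inside the precisely invariant disc furnished in Section~2 is a simple loop bounding that orbifold point, giving (3).

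The step I expect to be the main obstacle is the uniqueness clause in (1), together with keeping every homotopy compatible with the orbifold definition of free homotopy. Because that definition routes the homotopy through a connecting path $\delta$ required to avoid the orbifold points, I would need to verify that the homotopies I construct---both the sliding of $\alpha$ onto the axis and the identification of two geodesics in a common class---can indeed be realized by such admissible paths; this orbifold bookkeeping, rather than the underlying hyperbolic geometry, is the delicate part, since the existence and uniqueness of the axis and the transitivity of conjugation on the family of axes are standard facts about hyperbolic isometries.
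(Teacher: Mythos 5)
Your proposal follows essentially the same route as the paper: pass to $\Gamma$ via the isomorphism of Theorem~\ref{isomorphthm}, split on whether the associated element $g$ is hyperbolic, parabolic, or elliptic, and in the hyperbolic case realize the free homotopy by a connecting path from $\widetilde{p}$ to the axis $\mathcal{A}_g$. The paper in fact only writes out the existence half of case (1) and defers the rest to the 2-manifold analogy, so your sketches of uniqueness and of the parabolic and elliptic cases are consistent elaborations of the same argument rather than a different approach.
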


\begin{proof}
Let $\Phi:\Pi_1(\mathbb{H}/\Gamma,p)\rightarrow\Gamma$ be the isomorphism guaranteed by Theorem \ref{isomorphthm} and suppose $\Phi([\alpha])=g$.  The three cases arise respectively from whether $g$ is hyperbolic, parabolic, or elliptic.  As the proof is very similar to the 2-manifold case, we only verify the existence part of (1).  So suppose $g$ is hyperbolic.  Choose a path $\widetilde{\delta}$ in $\mathbb{H}$ which starts at $\widetilde{p}$, ends at some point $\widetilde{q}\in\mathcal{A}_g$, and does not pass through any elliptic fixed points.  Let $\widetilde{\gamma}$ be the path on $\mathcal{A}_g$ from $\widetilde{q}$ to $g(\widetilde{q})$.  It follows that $g(\widetilde{\delta}^{-1})$ is a path from $g(\widetilde{q})$ to $g(\widetilde{p})$.  Thus, the paths $\widetilde{\alpha}$ and $\widetilde{\delta}*\widetilde{\gamma}*g(\widetilde{\delta}^{-1})$ are homotopic paths in $\mathbb{H}$.  Letting $\delta=\pi(\widetilde{\delta})$ and $\gamma=\pi(\widetilde{\gamma})=\pi(\mathcal{A}_g)$, we have that $\alpha$ is $\mathbb{H}/\Gamma$-homotopic to $\delta*\gamma*\delta^{-1}$.  Thus, $\alpha$ is freely $\mathbb{H}/\Gamma$-homotopic to the closed geodesic $\gamma$.  
\end{proof}

\begin{defi}
Let $\Gamma$ be a Fuchsian group and suppose $\gamma$ is a closed geodesic on $\mathbb{H}/\Gamma$.  Lifting one copy of $\gamma$ yields a geodesic segment $\widetilde{\gamma}$ lying on the axis $\mathcal{A}_g$, of some hyperbolic isometry $g\in\Gamma$.  Now consider all axes $\{\mathcal{A}_{h_i}\}$ of hyperbolic isometries $h_i$ conjugate to $g$ in $\Gamma$ which intersect $\widetilde{\gamma}$ transversely.  We define the \textit{self-intersection number} of $\gamma$ to be the positive integer $|\{\mathcal{A}_{h_i}\}|$.  Suppose $\gamma$ has self-intersection number $n$.  If $n=0$, then we call $\gamma$ a \textit{simple closed geodesic}, while if $n>0$, then we call $\gamma$ a \textit{non-simple closed geodesic}.
\end{defi}

\section{Triangle Groups}
\label{tgsection}
Suppose $p,q,$ and $r$ are positive integers satisfying $\frac{1}{p}+\frac{1}{q}+\frac{1}{r}<1$. Then let $T$ be a hyperbolic triangle in $\mathbb{H}$ with interior angles $\frac{\pi}{p}, \frac{\pi}{q}$,  and $\frac{\pi}{r}$. Consider the discrete group $G$ generated by the reflections in the geodesics containing the sides of $T$.  The subgroup of orientation-preserving isometries of $G$, denoted $\Gamma(p,q,r)$, is referred to as a \textit{$(p,q,r)$-triangle group}.  We call a Fuchsian group $\Gamma$ a \textit{triangle group} if it is a $(p,q,r)$-triangle group for some integers $p,q$ and $r$.\\
\indent  In order to obtain a group presentation of $\Gamma(p,q,r)$ which will make computations easier, we use the same normalizations and notation as in \cite{TN}.  We work in the upper-half plane model of $\mathbb{H}$ and normalize $T$ to be the triangle in the left half of Figure \ref{fd(p,q,r)}.  Two vertices of $T$ are $i$ and  $\lambda^{-1}i$, for some determined $\lambda=\lambda(p,q,r)>1$, and $T$ has sides labeled $s_1,s_2,s_3$. Let $\sigma_i$ denote the reflection in the geodesic containing side $s_i$, for $i=1,2,3$.  Thus $G=\langle\sigma_1,\sigma_2,\sigma_3\rangle$ and the subgroup $\Gamma(p,q,r)$ consists of all the even length words in the $\sigma_i$.  It follows that $\Gamma(p,q,r)$ is index two in $G$ and so $T^*=T\cup\sigma_1(T)$ is a fundamental domain for $\Gamma(p,q,r)$.\\

\begin{figure}
\centering
\begin{overpic}[scale=.4]{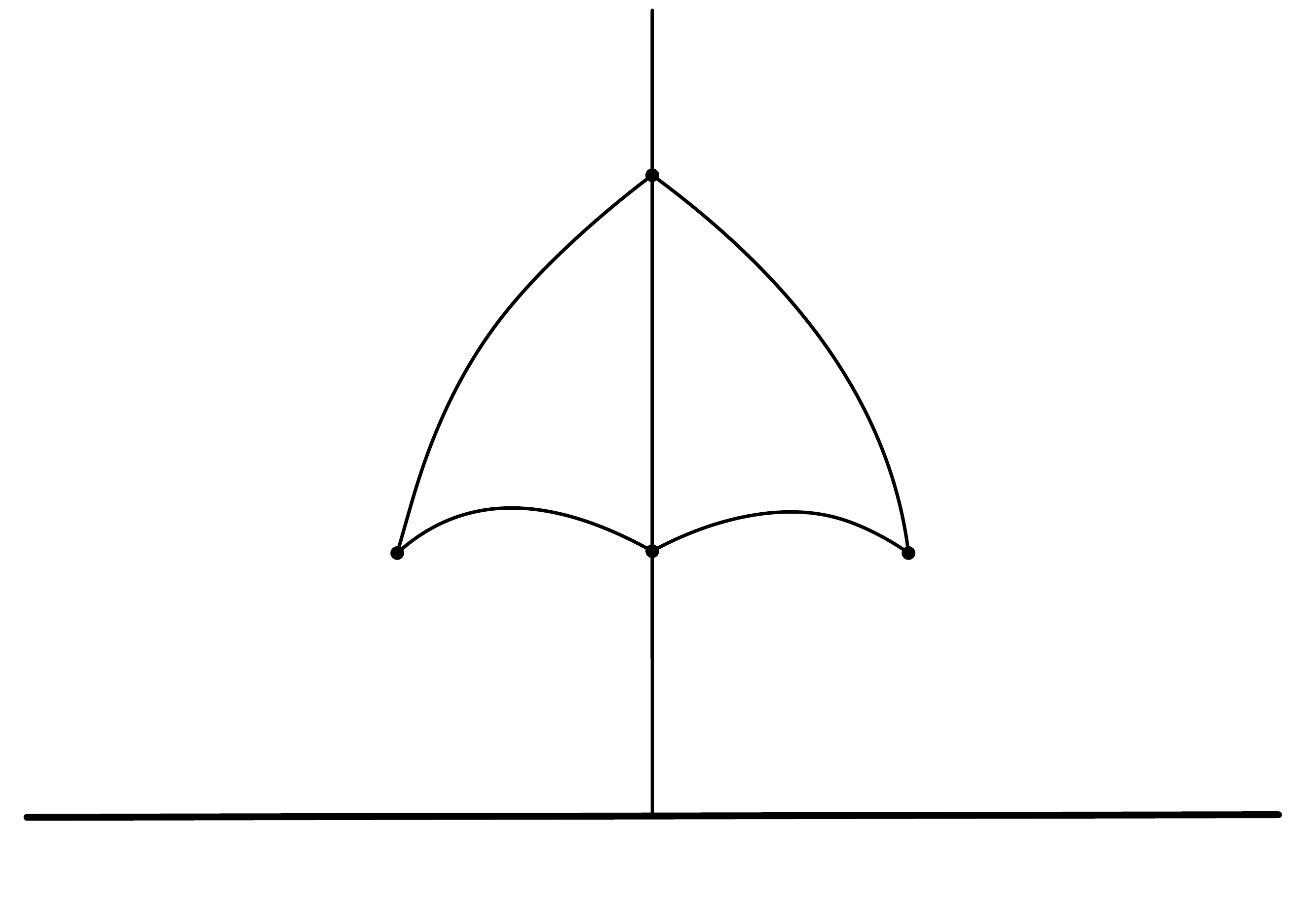}
\put(46,50){$\frac{\pi}{p}$}
\put(50,50){$\frac{\pi}{p}$}
\put(31.5,30){$\frac{\pi}{r}$}
\put(46.5,30){$\frac{\pi}{q}$}
\put(50.5,30){$\frac{\pi}{q}$}
\put(65.8,30){$\frac{\pi}{r}$}
\put(34,45){$s_2$}
\put(39,28){$s_3$}
\put(50,40){$s_1$}
\put(51,55.5){$i$}
\put(51,24){\footnotesize$\lambda^{-1}i$}
\put(42,40){\footnotesize$T$}
\put(55,40){\footnotesize$\sigma_1(T)$}
\put(49,3.5){0}
\put(80,60){$\mathbb{H}$}
\end{overpic}
\caption{Fundamental domain $T^*=T\cup\sigma_1(T)$ for $\Gamma(p,q,r)$}
\label{fd(p,q,r)}
\end{figure}
 \indent The group $\Gamma(p,q,r)$ can be generated by the two elements $A=\sigma_2\sigma_1$ and $B=\sigma_1\sigma_3$.   Both $A$ and $B$ can be expressed as elements of SL$(2,\mathbb{R})$:\\

\begin{center}
$A=\left( {\begin{array}{cc}
 \cos(\frac{\pi}{p}) & -\sin(\frac{\pi}{p}) \\
 \sin(\frac{\pi}{p}) & \cos(\frac{\pi}{p})  \\
 \end{array} } \right)$, \quad
 $B=\left( {\begin{array}{cc}
 \cos(\frac{\pi}{q}) & -\lambda^{-1}\sin(\frac{\pi}{q}) \\
 \lambda\sin(\frac{\pi}{q}) & \cos(\frac{\pi}{q})  \\
 \end{array} } \right)$\vspace*{.1in}
\end{center}

Let $C=B^{-1}A^{-1}=\sigma_3\sigma_1\sigma_1\sigma_2=\sigma_3\sigma_2$.  Then $C$ is an elliptic isometry of order $r$ whose fixed point is the vertex of $T^*$ to the left of the imaginary axis.  Using the fundamental domain in Figure \ref{fd(p,q,r)} we can write down the following presentation for $\Gamma(p,q,r)$:

\begin{equation}
\label{prestg}
\Gamma(p,q,r)=\langle A,B\mid A^p=B^q=(B^{-1}A^{-1})^r=I\rangle
\end{equation}

It follows that the quotient $\mathbb{H}/\Gamma(p,q,r)\colonequals\mathcal{O}(p,q,r)$ is a sphere with three orbifold points of orders $p$, $q$ and $r$.  We refer to $\mathcal{O}(p,q,r)$ as a \textit{$(p,q,r)$-triangle group orbifold}. If $\Gamma$ is a triangle group, we call $\mathbb{H}/\Gamma$ a \textit{triangle group orbifold}.\\
\indent Our next goal is to determine $\lambda$.  Using either triangle in Figure \ref{fd(p,q,r)}, we can arrive at the following equation using the hyperbolic law of cosines:

 \begin{equation}
 \label{sumlambda}
 \lambda+\lambda^{-1}=\dfrac{2E}{\sin\left(\frac{\pi}{p}\right)\sin\left(\frac{\pi}{q}\right)},
 \end{equation}

 \noindent where
 \begin{equation}
 \label{E}
 E=\cos\left(\frac{\pi}{r}\right)+\cos\left(\frac{\pi}{p}\right)\cos\left(\frac{\pi}{q}\right).
 \end{equation}

 \noindent Solving (\ref{sumlambda}) for $\lambda$ we obtain

 \begin{equation*}
 \lambda=\lambda(p,q,r)=\dfrac{E+\left(E^2-\sin^2\left(\frac{\pi}{p}\right)\sin^2\left(\frac{\pi}{q}\right)\right)^{\frac{1}{2}}}{\sin\left(\frac{\pi}{p}\right)\sin\left(\frac{\pi}{q}\right)}.
 \end{equation*}

\noindent  Another fact we will need later is that the trace of the matrix $C$, which we denote by $tr(C)$, is negative.  By multiplying $B^{-1}\cdot A^{-1}$ and using (\ref{sumlambda}) we have:

 \begin{equation}
 \label{trC}
 tr(C)=2\cos\left(\frac{\pi}{p}\right)\cos\left(\frac{\pi}{q}\right)-(\lambda+\lambda^{-1})\sin\left(\frac{\pi}{p}\right)\sin\left(\frac{\pi}{q}\right)=-2\cos\left(\frac{\pi}{r}\right)
\end{equation}

 In our discussion we allow triangle groups to contain parabolic isometries.  To achieve this we use the presentation (\ref{prestg}) and consider the elliptic isometry $C$ while letting $r\rightarrow\infty$.  In the limit $C$ is a parabolic isometry, and if we apply this limit ($r\rightarrow\infty$) considering the entire group $\Gamma(p,q,r)$ from (\ref{prestg}), we get the group $\Gamma(p,q,\infty)=\langle A,B\mid A^p=B^q=I\rangle$, where $\infty$ represents the conjugacy class of the maximal parabolic cyclic subgroup $\langle C\rangle$.  It follows that $\mathcal{O}(p,q,\infty)$ is a sphere with a puncture and two orbifold points having orders $p$ and $q$.  Similarly, if we start with the presentation (\ref{prestg}) and let $q,r\rightarrow\infty$ our group now becomes $\Gamma(p,\infty,\infty)=\langle A,B\mid A^p=I\rangle$.  Finally, we consider $\Gamma(\infty,\infty,\infty)$.  We treat this case independently from the others since $A\rightarrow I$ as $p\rightarrow\infty$, and $I$ is not a parabolic isometry.  So, let $\Gamma(\infty,\infty,\infty)=\langle A,B\rangle$ where

\begin{equation*}
\label{prestgp3}
A=\left( {\begin{array}{cc}
 1 & 2 \\
 0 & 1\\
 \end{array} } \right), \quad
 B=\left( {\begin{array}{cc}
 1 & 0 \\
 -2 & 1  \\
 \end{array}}\right).\end{equation*}\\

\begin{rem} $\Gamma(p,q,r)$ is conjugate to $\Gamma(p^{\prime},q^{\prime},r^{\prime})$ in the full isometry group of $\mathbb{H}$ if and only if $(p^{\prime},q^{\prime},r^{\prime})$ is a permutation of $(p,q,r)$.  Thus, without loss of generality we will assume throughout that $2\leq p\leq q\leq r\leq\infty$.\\
\end{rem}

\section{Figure Eight Geodesics on Triangle Group Orbifolds}
\subsection{Classifying Figure Eight Geodesics}
\indent Let $\Gamma(p,q,r)$ be the normalized triangle group as in Section \ref{tgsection}, and furthermore, assume that $2\leq p\leq q\leq r\leq\infty$.  Let $\pi:\mathbb{H}\rightarrow \mathcal{O}(p,q,r)=\mathbb{H}/\Gamma(p,q,r)$ be the projection map.  We remind the reader that concatenation of paths is performed from left to right, while function composition is from right to left.\\
\indent First, decompose $\mathcal{O}(p,q,r)$ into two isometric triangles by drawing in the geodesic paths between each orbifold point (the black segments in Figure \ref{basisfundgp}).  Orient the ``front" triangle clockwise and similarly orient the two triangles in the lift of $\mathcal{O}(p,q,r)$ pictured in Figure \ref{fd(p,q,r)} to be clockwise.  This establishes an orientation on $\mathcal{O}(p,q,r)$.  Let $x$ in Figure \ref{basisfundgp} be the base point of $\mathcal{O}(p,q,r)$ and choose the base point $\widetilde{x}\in\mathbb{H}$ to be the lift of $x$ which is on the imaginary axis between $i$ and $\lambda^{-1}i$.  In Figure \ref{basisfundgp} consider the three oriented loops $\alpha,\beta$, and $\gamma$ on $\mathcal{O}(p,q,r)$ based at $x$.  Letting $\Phi:\Pi_1(\mathcal{O}(p,q,r),x)\rightarrow\Gamma(p,q,r)$ be the corresponding isomorphism, it follows that $\Phi([\alpha])=A$, $\Phi([\beta])=B$, and $\Phi([\gamma])=B^{-1}A^{-1}=C$, where $A,B$ and $C$ are as in Section \ref{tgsection}.  Now consider the following three loops on $\mathcal{O}(p,q,r)$: $\mathcal{C}_{pq}\colonequals\beta*\alpha^{-1}$, $\mathcal{C}_{qr}\colonequals\beta*\gamma^{-1}$, and $\mathcal{C}_{pr}\colonequals\alpha*\gamma^{-1}$.  Hence, $\Phi([\mathcal{C}_{pq}])=BA^{-1}$, $\Phi([\mathcal{C}_{qr}])=BC^{-1}$, and $\Phi([\mathcal{C}_{pr}])=AC^{-1}$.  It follows from Theorem \ref{uniquegeodthm} that the loops $\mathcal{C}_{pq}, \mathcal{C}_{qr},$ and $\mathcal{C}_{pr}$ are respectively, freely $\mathcal{O}(p,q,r)$-homotopic to either a loop bounding the projection under $\pi$ of the fixed point (if elliptic or parabolic) or the projection of the axis (if hyperbolic) of $BA^{-1},BC^{-1},$ and $AC^{-1}$.  By abuse of notation, we use $\mathcal{C}_{pq}, \mathcal{C}_{qr},$ and $\mathcal{C}_{pr}$ to mean the free $\mathcal{O}(p,q,r)$-homotopy class of $\beta*\alpha^{-1}, \beta*\gamma^{-1}$, and $\alpha*\gamma^{-1}$, respectively. \\
\indent Theorem \ref{classthm} of this section explicity describes what these projections look like in the hyperbolic case, with the goal of identifying all figure eight geodesics on triangle group orbifolds.  The key fact to observe is that a figure eight geodesic on a triangle group orbifold must be in at least one of the free $\mathcal{O}(p,q,r)$-homotopy classes $\mathcal{C}_{pq}, \mathcal{C}_{qr}$, $\mathcal{C}_{pr}$, or the classes corresponding to the opposite orientations.  Before stating and proving the theorem we need to establish the following two lemmas.  

\begin{figure}
\centering
\begin{overpic}[scale=.4]{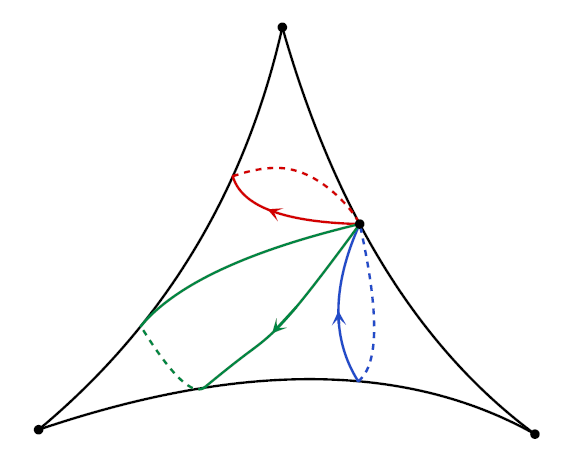}
\put(2,6){$r$}
\put(95,5){$q$}
\put(48,79){$p$}
\put(63,42){$x$}
\put(34,50){$\alpha$}
\put(60,8){$\beta$}
\put(32,8){$\gamma$}
\end{overpic}
\caption{Loops on $\mathcal{O}(p,q,r)$ based at $x$}
\label{basisfundgp}
\end{figure}

\begin{lem}
\label{imaglem}
 Suppose $g$ is a hyperbolic isometry represented by $\left( {\begin{smallmatrix}
 a & b \\
 c & d  \\
 \end{smallmatrix} } \right)\in\PSL(2,\mathbb{R})$ where $bc>0$.  Then its axis $\mathcal{A}_g$ intersects the imaginary axis at the point $i\sqrt{\frac{b}{c}}$.
 \end{lem}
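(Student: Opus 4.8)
The plan is to locate the two fixed points of $g$ on the boundary $\mathbb{R}\cup\{\infty\}$ of $\mathbb{H}$, recognize that the axis $\mathcal{A}_g$ is precisely the geodesic joining them, and then compute where that geodesic meets the imaginary axis.

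First I would observe that since $bc>0$ forces $c\neq 0$, neither fixed point can be at $\infty$, so both are finite real numbers. The fixed points of the M\"obius transformation $z\mapsto \frac{az+b}{cz+d}$ are the solutions of $cz^2+(d-a)z-b=0$. Calling them $z_-$ and $z_+$, Vieta's formulas give the product $z_-z_+=-b/c$ and the sum $z_-+z_+=(a-d)/c$. Because $g$ is hyperbolic, the discriminant $(a-d)^2+4bc=(a+d)^2-4$ is positive, so these roots are real and distinct, and the axis $\mathcal{A}_g$ is the geodesic joining $z_-$ to $z_+$, namely the Euclidean semicircle centered on the real axis with these two points as its endpoints.

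Next I would write down the equation of this semicircle. A geodesic semicircle with real endpoints $z_-$ and $z_+$ is cut out by $(x-z_-)(x-z_+)+y^2=0$, since setting $y=0$ recovers the two endpoints; expanding gives $x^2+y^2-(z_-+z_+)x+z_-z_+=0$. Intersecting with the imaginary axis $x=0$ leaves $y^2=-z_-z_+=b/c$. The hypothesis $bc>0$ guarantees $b/c>0$, so this equation has the positive solution $y=\sqrt{b/c}$, and the intersection point in $\mathbb{H}$ is $i\sqrt{b/c}$, as claimed.

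The computation is elementary, so I anticipate no serious obstacle; the only points requiring care are verifying that the hypothesis $bc>0$ simultaneously rules out the degenerate case $c=0$ and forces the product $z_-z_+$ of the fixed points to be negative --- so that they lie on opposite sides of the origin and the axis genuinely crosses the imaginary axis --- together with recalling the correct form of the equation of a geodesic in the upper half-plane.
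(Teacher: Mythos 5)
Your proof is correct and follows essentially the same route as the paper: locate the two real fixed points, write the equation of the semicircular axis, and set $x=0$ to find $y=\sqrt{b/c}$. Your use of Vieta's formulas in place of the explicit center-and-radius form is a minor streamlining, and your extra checks (that $c\neq 0$ and that the fixed points straddle the origin) are sound details the paper leaves implicit.
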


 \begin{proof}
 The fixed points of $g$ are $\frac{a-d\pm\sqrt{(a+d)^2-4}}{2c}$.  So the geodesic $\mathcal{A}_g$ has corresponding equation $\left|z-\frac{a-d}{2c}\right|=\frac{\sqrt{(a+d)^2-4}}{2c}$, where $z\in\mathbb{U}$. Letting $z=iy$ in this equation it follows from a straightforward computation that $ y=\sqrt{\frac{b}{c}}$.
Since $bc>0$, $b$ and $c$ have the same sign and thus $\sqrt{\frac{b}{c}}$ is a real number.  So $\mathcal{A}_g$ intersects the imaginary axis at the point $i\sqrt{\frac{b}{c}}$.
 \end{proof}

\begin{lem}
\label{BAlem}
The element $BA^{-1}\in\Gamma(3,q,r)$ for $q\geq3,r\geq4$, is hyperbolic and its axis intersects the imaginary axis at the point $ti$, where $t\leq1$. Moreover, $t=1$ if and only if $q=r$.
 \end{lem}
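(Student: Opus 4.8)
The plan is to compute $BA^{-1}$ explicitly with $p=3$, read off its off-diagonal entries, and apply Lemma~\ref{imaglem}. Writing $A^{-1}=\left(\begin{smallmatrix} 1/2 & \sqrt{3}/2 \\ -\sqrt{3}/2 & 1/2 \end{smallmatrix}\right)$ and multiplying by $B$ from Section~\ref{tgsection}, one finds that the $(1,2)$- and $(2,1)$-entries of $BA^{-1}$ are
\begin{equation*}
b=\tfrac{1}{2}\!\left(\sqrt{3}\cos\tfrac{\pi}{q}-\lambda^{-1}\sin\tfrac{\pi}{q}\right),\qquad c=\tfrac{1}{2}\!\left(\lambda\sin\tfrac{\pi}{q}-\sqrt{3}\cos\tfrac{\pi}{q}\right).
\end{equation*}
Provided $bc>0$, Lemma~\ref{imaglem} then places the intersection of $\mathcal{A}_{BA^{-1}}$ with the imaginary axis at $ti$ with $t=\sqrt{b/c}$, so the whole lemma reduces to three facts: that $BA^{-1}$ is hyperbolic, that $b,c>0$, and that $b\le c$ with equality iff $q=r$.

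For hyperbolicity I would compute the trace directly. Summing the diagonal entries gives $tr(BA^{-1})=\cos\tfrac{\pi}{q}+\tfrac{\sqrt{3}}{2}(\lambda+\lambda^{-1})\sin\tfrac{\pi}{q}$, and substituting $(\lambda+\lambda^{-1})\sin\tfrac{\pi}{q}=\tfrac{2E}{\sin(\pi/3)}=\tfrac{4E}{\sqrt3}$ from (\ref{sumlambda}), with $E=\cos\tfrac{\pi}{r}+\tfrac12\cos\tfrac{\pi}{q}$ from (\ref{E}), collapses this to $tr(BA^{-1})=2\big(\cos\tfrac{\pi}{q}+\cos\tfrac{\pi}{r}\big)$. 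Since $q\ge 3$ and $r\ge 4$ force $\cos\tfrac{\pi}{q}\ge\tfrac12$ and $\cos\tfrac{\pi}{r}\ge\tfrac{\sqrt2}{2}$, the trace exceeds $2$, so $BA^{-1}$ is hyperbolic.

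The same substitution then handles the comparison $b\le c$ in one stroke, since
\begin{equation*}
c-b=\tfrac12\!\left[(\lambda+\lambda^{-1})\sin\tfrac{\pi}{q}-2\sqrt{3}\cos\tfrac{\pi}{q}\right]=\frac{2}{\sqrt3}\!\left(\cos\tfrac{\pi}{r}-\cos\tfrac{\pi}{q}\right)\ge 0,
\end{equation*}
where the inequality uses $q\le r$ and the monotonicity of cosine on $(0,\pi/2]$; equality holds exactly when $q=r$. This gives $c\ge b$, hence $t=\sqrt{b/c}\le 1$ with equality iff $q=r$.

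It remains only to confirm the sign condition $bc>0$ needed for Lemma~\ref{imaglem}. Here the key observation is that $b+c=\tfrac12(\lambda-\lambda^{-1})\sin\tfrac{\pi}{q}>0$ because $\lambda>1$, so it suffices to show the smaller quantity $b$ is positive; this amounts to $\lambda>\tfrac{1}{\sqrt3}\tan\tfrac{\pi}{q}$, which follows from $\lambda>1$ together with $\tan\tfrac{\pi}{q}\le\tan\tfrac{\pi}{3}=\sqrt3$ for $q\ge 3$. Then $0<b\le c$, so $bc>0$ and $t=\sqrt{b/c}$ is a genuine positive real at most $1$. I expect this last sign verification to be the only genuinely delicate point, since the trace and $c-b$ identities are forced once (\ref{sumlambda}) is applied; everything else is bookkeeping.
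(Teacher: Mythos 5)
Your proof is correct and follows essentially the same route as the paper: compute the matrix of $BA^{-1}$ explicitly, show $0<b\le c$ with equality precisely when $q=r$, and invoke Lemma~\ref{imaglem}. The only cosmetic difference is that you obtain $tr(BA^{-1})=2\left(\cos\frac{\pi}{q}+\cos\frac{\pi}{r}\right)$ by summing the diagonal entries and applying (\ref{sumlambda}), whereas the paper uses the Cayley--Hamilton trace identity $tr(A^{-1}B)=tr(A)tr(B)-tr(AB)$; both give the same lower bound $1+\sqrt{2}>2$.
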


 \begin{proof}
 To show that $BA^{-1}$ is hyperbolic we will show that its trace is greater than two.  We follow the development in \cite{IR} which yields a useful trace identity. By the Cayley-Hamilton Theorem we have the following identity which holds for determinant one matrices: $ A^{-1}=tr(A)\cdot I - A$.  It follows that $A^{-1}B=tr(A)\cdot B-AB$ and so,
 \begin{equation}
 \label{treqn}
 tr(A^{-1}B)=tr(A)tr(B)-tr(AB).
 \end{equation}
 Considering $A,B,C\in\Gamma(p,q,r)$ where $\Gamma(p,q,r)$ has presentation (\ref{prestg}), we see that $tr(A)=2\cos(\frac{\pi}{p})$, $tr(B)=2\cos(\frac{\pi}{q})$ and we have $tr(AB)=tr(C^{-1})=tr(C)=-2\cos(\frac{\pi}{r})$ from (\ref{trC}).  Thus, with (\ref{treqn}) and since we are considering the case where $p=3$, $q\geq3$, and $r\geq4$ we get \begin{eqnarray*}
tr(BA^{-1}) = tr(A^{-1}B)&=&tr(A)tr(B)-tr(AB)\\
 &\geq& 2\left[\frac{1}{2}+\frac{\sqrt{2}}{2}\right]=1+\sqrt{2}>2
 \end{eqnarray*}

Thus, $BA^{-1}\in\Gamma(3,q,r)$ (for $q\geq3, r\geq4$) is hyperbolic.  Using (\ref{prestg}), we have
 \begin{equation*}
 \label{matrixBA}BA^{-1}=\left(\begin{matrix}
 \frac{1}{2}\cos(\frac{\pi}{q})+\frac{\sqrt{3}}{2}\lambda^{-1}\sin(\frac{\pi}{q}) & \frac{\sqrt{3}}{2}\cos(\frac{\pi}{q})-\frac{1}{2}\lambda^{-1}\sin(\frac{\pi}{q})\\
 \frac{1}{2}\lambda\sin(\frac{\pi}{q})-\frac{\sqrt{3}}{2}\cos(\frac{\pi}{q}) & \frac{\sqrt{3}}{2}\lambda\sin(\frac{\pi}{q})+\frac{1}{2}\cos(\frac{\pi}{q})\\
 \end{matrix}\right)\defeq\left(\begin{matrix}
 a & b\\
 c & d\\
 \end{matrix}\right)
 \end{equation*}

 In light of Lemma \ref{imaglem}, it suffices to show that $b,c>0$ and $\sqrt{\frac{b}{c}}\leq1$, with equality holding if and only if $q=r$.  It follows that

 \begin{eqnarray*}
b>0 
&\Leftrightarrow& \sqrt{3}\lambda\cos\left(\frac{\pi}{q}\right)>\sin\left(\frac{\pi}{q}\right)\\
\end{eqnarray*}
The inequality on the right clearly holds for $q\geq4$, since $\sqrt{3},\lambda>1$ and $\cos(\frac{\pi}{q})\geq\sin(\frac{\pi}{q})$.  If $q=3$, the last inequality reduces to $\lambda>1$, which is true. Next, we show that $b\leq c$, and since we have established that $b>0$, we will have $c>0$.  A straight forward computation using (\ref{sumlambda}) and (\ref{E}) yields:
\begin{eqnarray*}
b\leq c 
&\Leftrightarrow& \cos\left(\frac{\pi}{q}\right)\leq\cos\left(\frac{\pi}{r}\right)\\
\end{eqnarray*}

As $3\leq q\leq r$ the inequality on the right holds, and it is an equality if and only if $q=r$.  This finishes the proof since $0<b\leq c$ implies that $\sqrt{\frac{b}{c}}\leq1$.
\end{proof}

\indent  Among the elements $BA^{-1},BC^{-1}$, and $AC^{-1}$ in $\Gamma(p,q,r)$ which are hyperbolic, we want to know what the projection of their axes look like.  To achieve this we choose appropriate fundamental domains for each of the elements (Figures \ref{reflectBA}-\ref{reflectAC}). The choices were made in order to represent the elements as a product of two reflections in geodesics containing sides of the fundamental quadrilateral.  In each figure, and for all figures in the proof of Theorem \ref{classthm}, we label the relevant vertices of the fundamental domain of $\Gamma(p,q,r)$ with the elliptic isometry fixing that point.  Thus, the label $A$ is at the point $i$ and the label $B$ is at the point $\lambda^{-1}i$.  For Figures \ref{reflectBA}-\ref{reflectAC}, let $\sigma_i$ be the reflection in the geodesic $L_i$ containing the side $s_i$, for $i=1,2,3$.  In Figure \ref{reflectBA}, we can write $B=\sigma_1\sigma_2$ and $A^{-1}=\sigma_2\sigma_3$, and so $BA^{-1}=\sigma_1\sigma_2\sigma_2\sigma_3=\sigma_1\sigma_3$. Hence, if $L_1$ and $L_3$ are disjoint in $\overline{\mathbb{H}}=\mathbb{H}\cup\mathbb{R}\cup\{\infty\}$, then $BA^{-1}$ is hyperbolic and $\mathcal{A}_{BA^{-1}}$ is the common orthogonal to $L_1$ and $L_3$.  
The key idea in all three figures was to choose $L_2$ to be the unique geodesic which passes through the fixed points of the two elliptic isometries in the composition.  For example for $BA^{-1}$ we chose $L_2$ to be the imaginary axis which is the unique geodesic passing through the points $i$ and $\lambda^{-1}i$, the fixed points of $A^{-1}$ and $B$, respectively. \\
\indent In order to state and prove the main theorem in a less cumbersome way, we adopt the following notation.  If $f$ is conjugate to $g$ in $\Gamma(p,q,r)$, we denote this by $f\sim g$.  Also, we interpret an order $\infty$ orbifold point as a puncture.

\begin{figure}
\centering
\begin{overpic}[scale=.3]{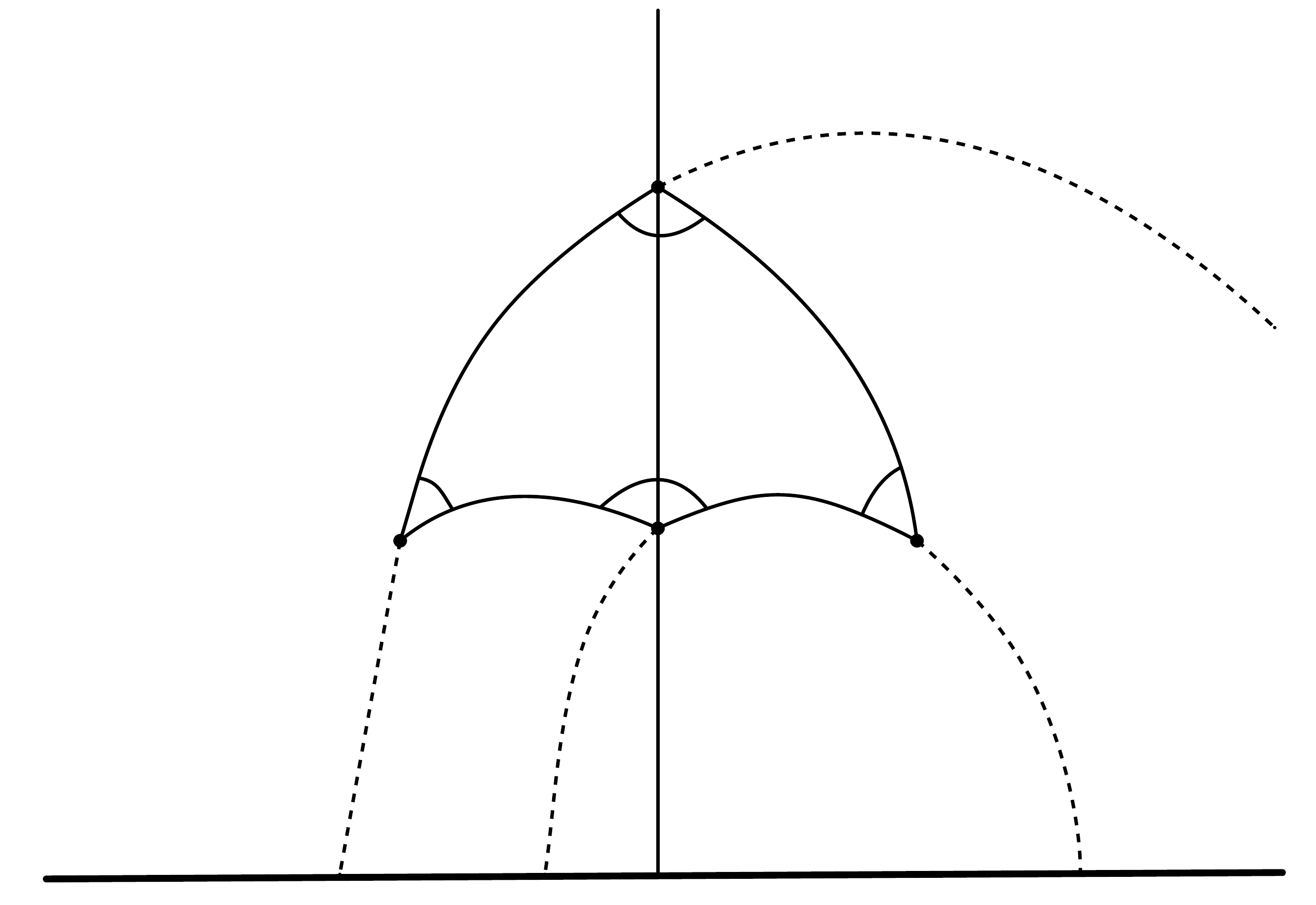}
\put(25.5,25){\tiny{$C$}}
\put(46,55){\tiny{$A$}}
\put(50.5,25){\tiny{$B$}}
\put(33,33){\tiny{$\frac{\pi}{r}$}}
\put(34.5,45){\tiny{$s_3$}}
\put(47,34){\tiny{$\frac{\pi}{q}$}}
\put(50.5,34){\tiny{$\frac{\pi}{q}$}}
\put(47,48){\tiny{$\frac{\pi}{p}$}}
\put(50.5,48){\tiny{$\frac{\pi}{p}$}}
\put(65,60){\tiny{$L_3$}}
\put(50.5,40.5){\tiny{$s_2$}}
\put(50.5,12){\tiny{$L_2$}}
\put(58,29){\tiny{$s_1$}}
\put(65,33){\tiny{$\frac{\pi}{r}$}}
\put(78.5,20){\tiny{$L_1$}}
\put(49.1,-1){\tiny{\footnotesize0}}
\put(90,65){\tiny{$\mathbb{H}$}}
\end{overpic}
\caption{Fundamental domain of $\Gamma(p,q,r)$ chosen for $BA^{-1}$}
\label{reflectBA}
\end{figure}

\begin{figure}
\centering
\begin{overpic}[scale=.3]{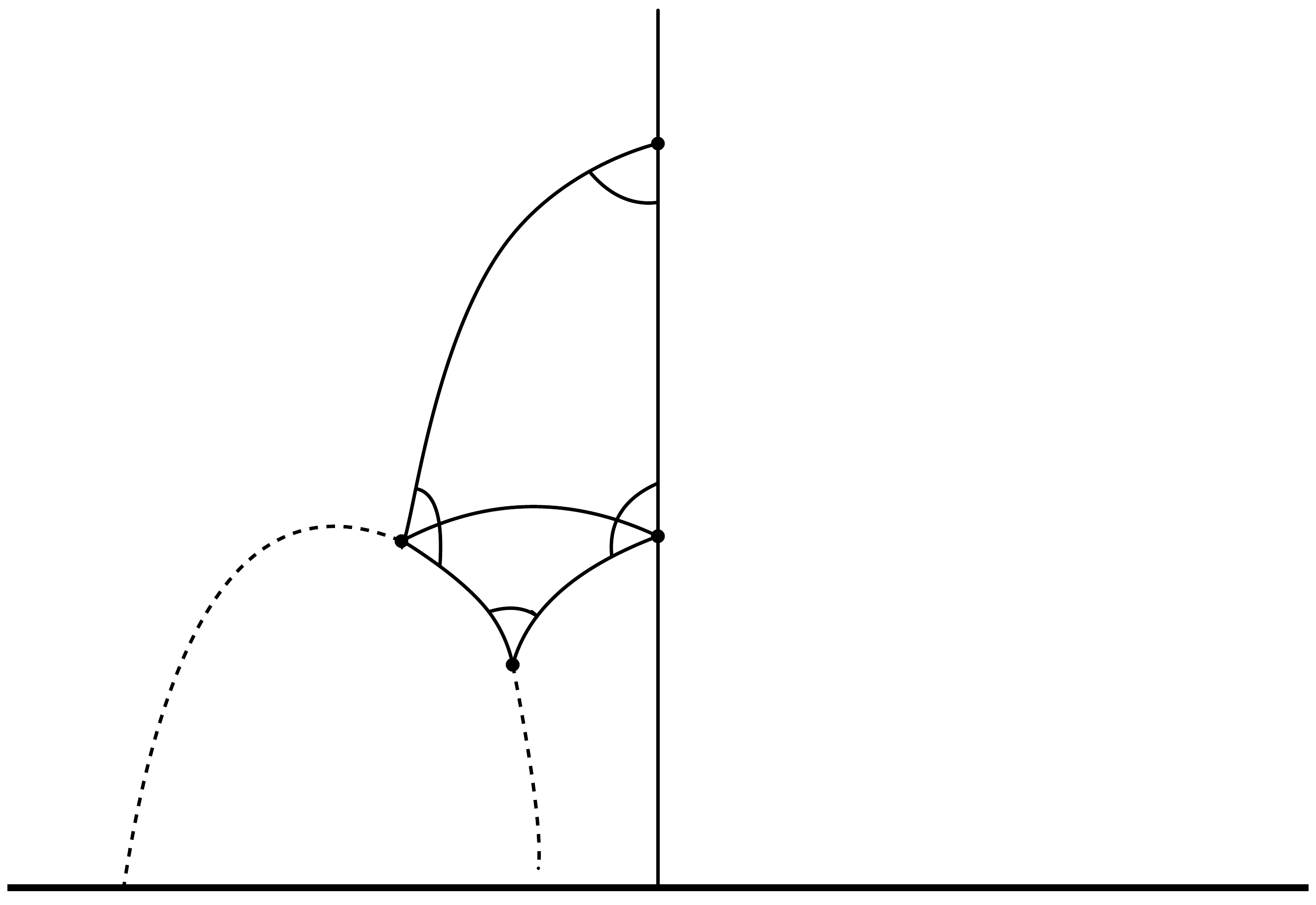}
\put(9,20){\tiny{$L_3$}}
\put(26,25){\tiny{$C$}}
\put(32,22){\tiny{$s_3$}}
\put(33,31.5){\tiny{$\frac{\pi}{r}$}}
\put(33.4,26.5){\tiny{$\frac{\pi}{r}$}}
\put(38,24.5){\tiny{$\frac{\pi}{p}$}}
\put(43,27){\tiny{$\frac{\pi}{q}$}}
\put(45,32){\tiny{$\frac{\pi}{q}$}}
\put(45,50.5){\tiny{$\frac{\pi}{p}$}}
\put(39,31){\tiny{$s_2$}}
\put(51,57){\tiny{$A$}}
\put(51,44){\tiny{$s_1$}}
\put(51,27){\tiny{$B$}}
\put(51,13){\tiny{$L_1$}}
\put(49.2,-2){\footnotesize0}
\put(90,65){\tiny{$\mathbb{H}$}}
\end{overpic}
\caption{Fundamental domain of $\Gamma(p,q,r)$ chosen for $BC^{-1}$}
\label{reflectBC}
\end{figure}

\begin{figure}
\centering
\begin{overpic}[scale=.3]{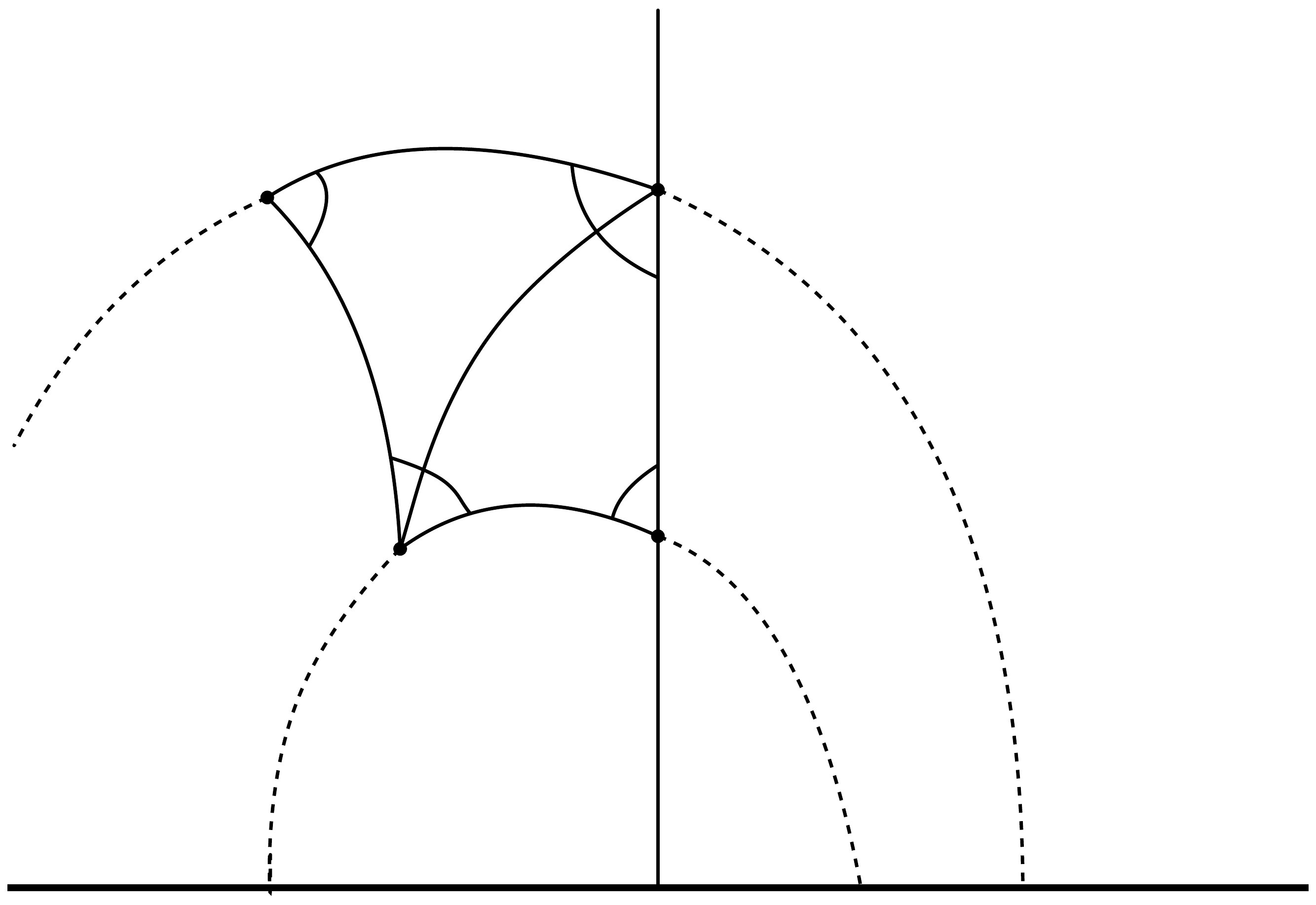}
\put(25.5,52.5){\tiny{$\frac{\pi}{q}$}}
\put(35,59){\tiny{$s_1$}}
\put(41,52.5){\tiny{$\frac{\pi}{p}$}}
\put(51,55){\tiny{$A$}}
\put(31,24){\tiny{$C$}}
\put(30,35.5){\tiny{$\frac{\pi}{r}$}}
\put(38,42){\tiny{$s_2$}}
\put(45.5,46){\tiny{$\frac{\pi}{p}$}}
\put(34.5,33.1){\tiny{$\frac{\pi}{r}$}}
\put(39,27.5){\tiny{$s_3$}}
\put(45.5,33.5){\tiny{$\frac{\pi}{q}$}}
\put(46,25){\tiny{$B$}}
\put(70,40){\tiny{$L_1$}}
\put(61,20){\tiny{$L_3$}}
\put(49.2,-2){\tiny{\footnotesize0}}
\put(90,65){\tiny{$\mathbb{H}$}}
\end{overpic}
\caption{Fundamental domain of $\Gamma(p,q,r)$ chosen for $AC^{-1}$}
\label{reflectAC}
\end{figure}

\begin{thm}
Let $\pi:\mathbb{H}\rightarrow\mathcal{O}(p,q,r)=\mathbb{H}/\Gamma(p,q,r)$ be the projection map and assume $2\leq p\leq q\leq r\leq\infty$.
\label{classthm}
\begin{itemize}
\item[($i$)]
In $\Gamma(2,q,r)$ for $q\geq3$:
\begin{itemize}
  \item[$\bullet$] $BA^{-1}$ is elliptic of order $r$ (parabolic if $r=\infty$).
  \item[$\bullet$] If $q=3$, then $BC^{-1}$ is elliptic of order $r$ (parabolic if $r=\infty$); if $q=4$, then $BC^{-1}$ is hyperbolic and $\pi(\mathcal{A}_{BC^{-1}})$ is the geodesic path from the orbifold point of order two to the orbifold point of order four and back again;  if $q\geq5$, then $BC^{-1}$ is hyperbolic and $\pi(\mathcal{A}_{BC^{-1}})$ is the figure eight geodesic bounding the orbifold points of orders $q$ and $r$.
\item[$\bullet$] $AC^{-1}$ is elliptic of order $q$ (parabolic if $q=\infty$).
\end{itemize}
\item[($ii$)]
In $\Gamma(3,q,r)$ for $q\geq3,r\geq4$:
\begin{itemize}
\item[$\bullet$] $BA^{-1}\sim AC^{-1}$ and they are hyperbolic.  If $q=r$, then $\pi(\mathcal{A}_{BA^{-1}})=\pi(\mathcal{A}_{AC^{-1}})$ is the geodesic path passing through the orbifold point of order three pictured in Figure \ref{3qq}; if $q\neq r$, then $\pi(\mathcal{A}_{BA^{-1}})=\pi(\mathcal{A}_{AC^{-1}})$ is the figure eight geodesic bounding the orbifold points of orders three and $q$.
\item[$\bullet$] If $q=3$,  then $BC^{-1}\sim (BA^{-1})^{-1}$ and they are hyperbolic.  Thus, $\pi(\mathcal{A}_{BC^{-1}})=\pi\left(\mathcal{A}_{(BA^{-1})^{-1}}\right)$, which is simply $\pi(\mathcal{A}_{BA^{-1}})$ but with opposite orientation.  However, if $q\geq4$, then $BC^{-1}$ is hyperbolic and $\pi(\mathcal{A}_{BC^{-1}})$ is the figure eight geodesic bounding the orbifold points of orders $q$ and $r$.
\end{itemize}
\item[($iii$)]
In $\Gamma(p,q,r)$ for $p,q,r\geq4$:
\begin{itemize}
\item[$\bullet$] $BA^{-1}$ is hyperbolic and $\pi(\mathcal{A}_{BA^{-1}})$ is the figure eight geodesic bounding the orbifold points of orders $p$ and $q$.
\item [$\bullet$] $BC^{-1}$ is hyperbolic and $\pi(\mathcal{A}_{BC^{-1}})$ is the figure eight geodesic bounding the orbifold points of orders $q$ and $r$.
\item[$\bullet$] $AC^{-1}$ is hyperbolic and $\pi(\mathcal{A}_{AC^{-1}})$ is the figure eight geodesic bounding the orbifold points of orders $p$ and $r$.
\end{itemize}
\end{itemize}
\end{thm}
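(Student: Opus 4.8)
The plan is to dispose of the three elements $BA^{-1}$, $BC^{-1}$, $AC^{-1}$ one at a time inside each of the three regimes $p=2$, $p=3$, and $p\geq 4$, in two stages: first decide the type (elliptic, parabolic, or hyperbolic) from trace data alone, and then, for the hyperbolic elements, read off the shape of the projected axis from the reflection decompositions recorded in Figures \ref{reflectBA}--\ref{reflectAC}. The guiding principle is that each of these elements is, by construction, the image under $\Phi$ of one of the figure eight classes $\mathcal{C}_{pq},\mathcal{C}_{qr},\mathcal{C}_{pr}$, so the only question is whether the canonical geodesic in that class is an honest figure eight, degenerates to a path through a cone point, or fails to exist because the element is elliptic or parabolic.

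The type is governed by the trace identity $tr(A^{-1}B)=tr(A)tr(B)-tr(AB)$ together with $tr(A)=2\cos(\pi/p)$, $tr(B)=2\cos(\pi/q)$, and $tr(AB)=tr(C)=-2\cos(\pi/r)$ from (\ref{trC}). Applying this (and its cyclic variants) gives $tr(BA^{-1})=4\cos(\pi/p)\cos(\pi/q)+2\cos(\pi/r)$, $tr(BC^{-1})=-4\cos(\pi/q)\cos(\pi/r)-2\cos(\pi/p)$, and $tr(AC^{-1})=-4\cos(\pi/p)\cos(\pi/r)-2\cos(\pi/q)$, after which one simply compares $|tr|$ with $2$. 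The one point requiring care is that borderline inequalities must be made strict using the hyperbolicity hypothesis $\tfrac1p+\tfrac1q+\tfrac1r<1$: for example, when $p=2,q=4$ that constraint forces $r\geq 5$, which is exactly what pushes $BC^{-1}$ out of the parabolic value $|tr|=2$ into the hyperbolic range. For the elliptic elements the order is read off the trace ($2\cos(\pi k/m)$ with $\gcd(k,m)=1$ forces order $m$ in a discrete group), and several collapse outright via (\ref{prestg}): when $p=2$ one has $A^2=I$, so $AC^{-1}=A^2B=B$ has order $q$, and $BA^{-1}=BA$ has trace $2\cos(\pi/r)$, hence order $r$. The conjugacy statements are the same kind of computation: since $C^{-1}=AB$ we have $AC^{-1}=A^2B$ and $BC^{-1}=BAB$, so conjugating $BA^{-1}$ by $A^{-1}$ (using $A^3=I$) yields $A^{-1}B=AC^{-1}$, and conjugating $(BA^{-1})^{-1}=AB^{-1}$ by $B$ (using $B^3=I$) yields $BAB=BC^{-1}$; conjugate elements have identical axis projections, so the geometry transfers for free.

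The geometric core is identifying $\pi(\mathcal{A}_g)$ for the hyperbolic elements. Here I would use the decomposition $g=\sigma_1\sigma_3=(\sigma_1\sigma_2)(\sigma_2\sigma_3)$ furnished by the chosen fundamental domains, in which $\sigma_1\sigma_2$ and $\sigma_2\sigma_3$ are precisely the elliptic rotations about the two cone points lying on $L_2$. Thus each hyperbolic $g$ is literally a product of rotations about the two orbifold points named in the theorem, which is why its free homotopy class is that of a figure eight loop encircling them. Since $L_1,L_3$ are disjoint exactly when $g$ is hyperbolic, $\mathcal{A}_g$ is their common perpendicular, and I would locate it with Lemma \ref{imaglem}, computing its meeting point with the imaginary axis. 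The dichotomy is then: $\pi(\mathcal{A}_g)$ is a genuine figure eight bounding the two points when $\mathcal{A}_g$ avoids both elliptic fixed points, and it degenerates to a geodesic path running through a cone point when $\mathcal{A}_g$ meets one. Lemma \ref{BAlem} delivers this precisely for $BA^{-1}$ in the case $p=3$: the meeting point $ti$ satisfies $t\leq 1$ with $t=1$ (landing on the order-three point $i$) if and only if $q=r$, giving the path of Figure \ref{3qq} in that case and a figure eight otherwise.

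I expect the geometric step to be the main obstacle. The trace bookkeeping, once the hyperbolicity constraint is respected, is mechanical, but turning the product-of-rotations picture and the located axis into a rigorous statement that the self-intersection number is exactly one and that the two lobes enclose exactly the asserted cone points requires tracking how $\mathcal{A}_g$ and its $\Gamma$-translates sit across the fundamental quadrilateral. The most delicate sub-cases are those forced by an order-two point ($p=2$): there the involution about the cone point drags the axis through it, producing the $p=2,q=4$ path from the order-two to the order-four point rather than a figure eight, and detecting this boundary behavior — as opposed to the generic figure eight for $q\geq 5$ — is where I would concentrate the argument.
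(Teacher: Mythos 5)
Your overall strategy is the same as the paper's: decide type from the trace identity (\ref{treqn}) and the conjugations $AC^{-1}=A^2B$, $BC^{-1}=BAB$, represent each hyperbolic element as $\sigma_1\sigma_3$ via the fundamental domains of Figures \ref{reflectBA}--\ref{reflectAC}, take the axis to be the common perpendicular of $L_1$ and $L_3$, and use Lemmas \ref{imaglem} and \ref{BAlem} to separate the $q=r$ degeneration from the genuine figure eight in case (ii). Your algebraic skeleton is correct and complete: the trace formulas, the observation that $\tfrac1p+\tfrac1q+\tfrac1r<1$ is what excludes the parabolic borderline at $(2,4,4)$, and both conjugacy computations ($A^{-1}(BA^{-1})A=A^{-1}B=AC^{-1}$ when $A^3=I$, and $B(AB^{-1})B^{-1}=BAB=BC^{-1}$ when $B^3=I$) all check out and match the paper's.

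The genuine gap is in what you yourself flag as ``the geometric core.'' Your proposed criterion --- figure eight if $\mathcal{A}_g$ misses both elliptic fixed points, degenerate path if it hits one --- is necessary but not sufficient to identify $\pi(\mathcal{A}_g)$. Knowing where the axis meets the imaginary axis (Lemma \ref{imaglem}) does not by itself determine which sides of the fundamental quadrilateral the axis crosses, and hence does not determine the self-intersection number of the projection or which cone points the two lobes enclose. In Figure \ref{3qqdom}, for instance, the candidate segments $\alpha$ and $\gamma$ both avoid the elliptic fixed points; the paper eliminates $\gamma$ (and every other crossing pattern) by a Gauss--Bonnet-type argument: any disallowed configuration would close up a hyperbolic polygon whose interior angle sum exceeds what is permitted (this is also how the paper gets the lower bound $3\pi/5$ on the relevant angles in Figure \ref{BC(two,q,r)} to show $L$ and the imaginary axis are disjoint in $\overline{\mathbb{H}}$). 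You correctly name this as the place to concentrate the argument, but you supply no mechanism for carrying it out; the angle-sum exclusion is the missing ingredient, and without it the assertion that each projected axis is a figure eight bounding exactly the two named orbifold points is not established. Likewise, ``product of rotations about the two cone points'' pins down the free homotopy class, but the passage from free homotopy class to the explicit shape of the geodesic representative is precisely the content of the theorem and cannot be taken for granted.
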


\begin{proof}
We note that in this proof all of the figures are drawn for the cases where $p,q,$ and $r$ are finite, but the figures and corresponding arguments can be easily adjusted in the infinite cases.

\begin{component}[Case (i): $\Gamma(2,q,r)$ for $q\geq3$] First note that $BA^{-1}=BA\sim AB=C^{-1}$, an elliptic isometry of order $r$ (parabolic if $r=\infty$).  Thus, $BA^{-1}$ is elliptic of order $r$ (parabolic if $r=\infty$).\\
\indent Now, if $q=3$, we see that $BC^{-1}=BAB\sim B^2A=B^{-1}A^{-1}=C$, an elliptic isometry of order $r$ (parabolic if $r=\infty$). Next, if $q=4$, $BC^{-1}=BAB\sim B^2A$, which is the product of two non-conjugate order two elliptic isometries.  It follows that $BC^{-1}$ is hyperbolic and its axis projects to the geodesic path from the order two orbifold point to the order four orbifold point and back again.  If $q\geq5$, we work with the fundamental domain in Figure \ref{BC(two,q,r)}. As was done above in Figure \ref{reflectBC}, here in Figure \ref{BC(two,q,r)} we express $BC^{-1}$ as the composition of reflection in the imaginary axis with reflection in the dashed geodesic, $L$.  Note that since $q,r\geq5$, we get the lower bound $\frac{3\pi}{5}$ for the angles so indicated in the figure.  It is clear then that $L$ and the imaginary axis do no intersect in $\overline{\mathbb{H}}$ because otherwise a triangle is formed whose sum of interior angles is greater than $\pi$. So indeed, $BC^{-1}$ is hyperbolic and $\mathcal{A}_{BC^{-1}}$ is the common orthogonal of $L$ and the imaginary axis.  Furthermore, we can see that $\mathcal{A}_{BC^{-1}}$ must pass through the sides of the fundamental quadrilateral, disjoint from the vertices (the red geodesic segment), since all other possibilities result in a polygon whose interior angle sum is too large to be hyperbolic.  The red segment of $\mathcal{A}_{BC^{-1}}$ represents only half of the translation length, but a symmetric copy lies on the other side of the imaginary axis.  When translating that other segment into the original fundamental quadrilateral by $B^{-1}$ and then passing to the quotient, we get the figure eight geodesic whose component loops bound the orbifold points of orders $q$ and $r$.\\ 
\indent Finally, $AC^{-1}=A^2B=B$, an order $q$ elliptic isometry (parabolic if $q=\infty$).

\begin{figure}
\centering
\begin{overpic}[scale=.4]{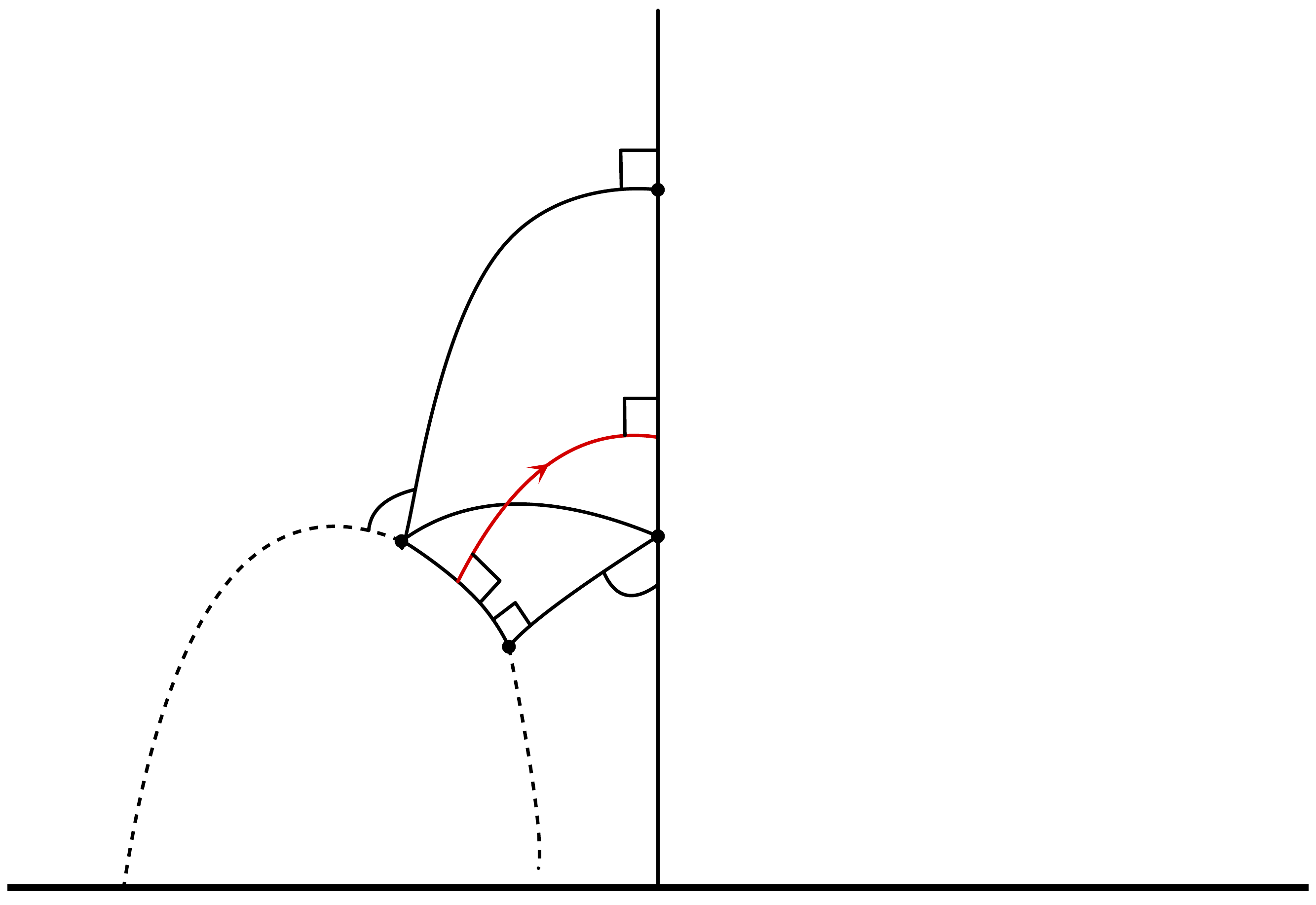}
\put(23,32){${\geq}\frac{3\pi}{5}$}
\put(26,24){$C$}
\put(32,31){$\frac{\pi}{r}$}
\put(33,26.8){$\frac{\pi}{r}$}
\put(42.5,20){${\geq}\frac{3\pi}{5}$}
\put(51,53){$A$}
\put(51,26.5){$B$}
\put(47,31){$\frac{\pi}{q}$}
\put(43.6,26.9){$\frac{\pi}{q}$}
\put(49.2,-1.1){\footnotesize0}
\put(90,65){$\mathbb{H}$}
\put(10,20){$L$}
\end{overpic}
\caption{A segment of $\mathcal{A}_{BC^{-1}}$ where $BC^{-1}\in\Gamma(2,q,r)$ for $q,r\geq5$}
\label{BC(two,q,r)}
\end{figure}
\end{component}

\begin{component}[Case (ii): $\Gamma(3,q,r)$ for $q\geq3, r\geq4$] Well, $BA^{-1}\sim A^{-1}B=A^2B=AC^{-1}$, and we choose to consider $BA^{-1}$.  The corresponding fundamental quadrilateral for $\Gamma(3,q,r)$ for $q\geq3, r\geq4$ is in Figure \ref{3qqdom}.  The two dashed geodesics in the figure cannot intersect and so $BA^{-1}$ is hyperbolic.  After ruling out potential geodesic segments of $\mathcal{A}_{BA^{-1}}$ which create polygons whose interior angle sums are too large to be hyperbolic, we are left with the three geodesic segments labeled $\alpha, \beta,$ and $\gamma$.  Here $\alpha$ is representing a geodesic segment with end points on the sides $s_1$ and $s_2$ of the quadrilateral (disjoint from the vertices), while $\beta$ has an end point at $i$ and an end point on the side $s_2$ (disjoint from the vertices), and finally $\gamma$ has an end point on the dashed geodesic segment $u$ and the side $s_2$ (disjoint from the vertices). By Lemma \ref{BAlem}, we know that if $q=r$, $\mathcal{A}_{BA^{-1}}$ intersects the imaginary axis at $i$ and thus $\beta$ is the correct segment.  It represents half of the translation length of $BA^{-1}$.  The other half is just the reflection of $\beta$ in the imaginary axis with opposite orientation.  In the quotient, we will have a geodesic loop passing through the order three orbifold point (see Figure \ref{3qq}).  If $q\neq r$, then the correct geodesic segment in Figure \ref{3qqdom} is $\alpha$, since by Lemma \ref{BAlem},  $\mathcal{A}_{BA^{-1}}$ must pass through the imaginary axis strictly below $i$. (The segment $\gamma$ cannot intersect the imaginary axis below $i$ because it would yield a triangle whose interior angles sum to at least $\pi$.)  It follows that $\mathcal{A}_{BA^{-1}}$ projects to the figure eight geodesic bounding the orbifold points of orders three and $q$, when $q\neq r$.

\begin{figure}
\centering
\begin{overpic}[scale=.4]{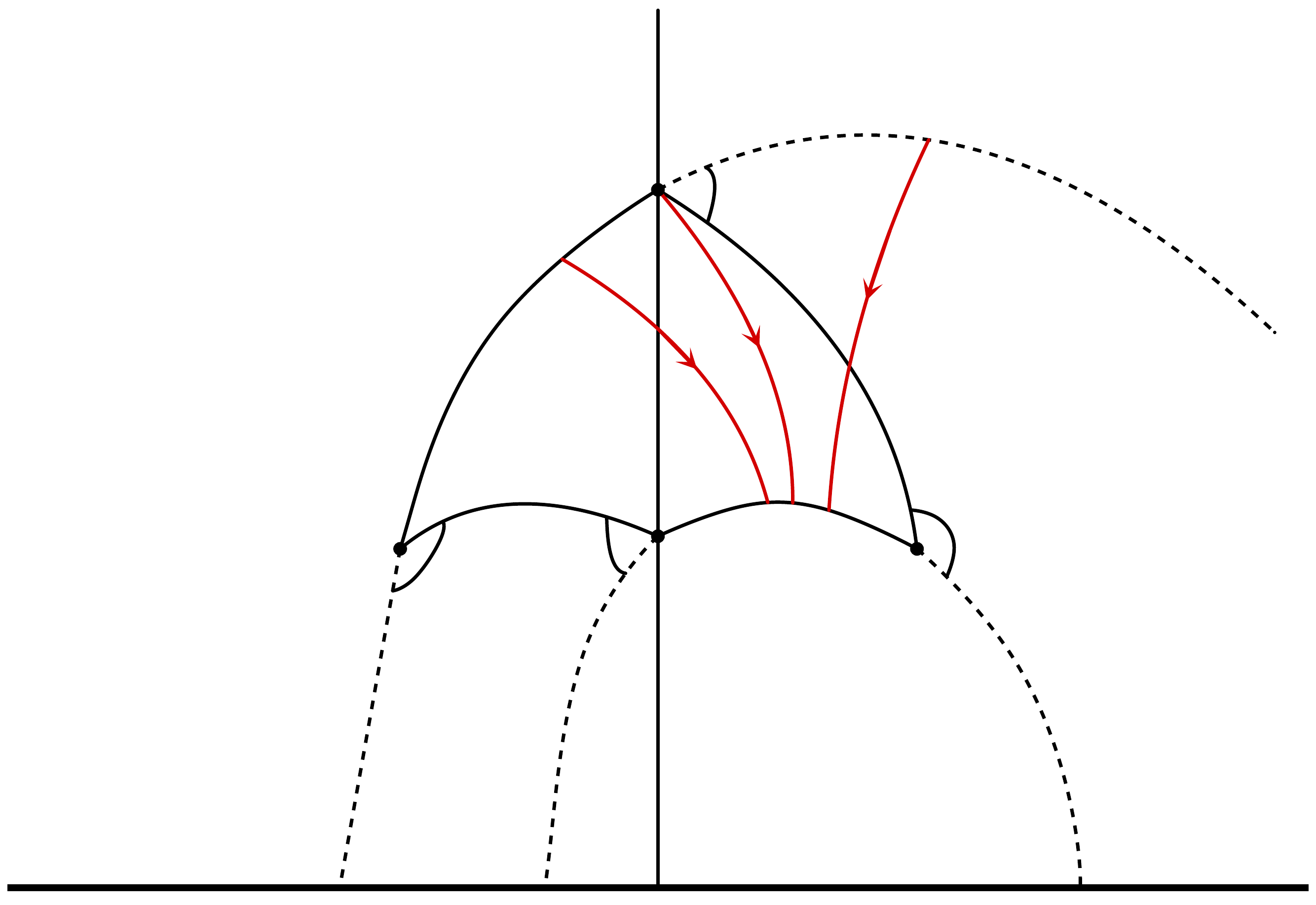}
\put(26,26){$C$}
\put(32.5,23.5){${\geq}\frac{3\pi}{4}$}
\put(41,26){${\geq}\frac{\pi}{3}$}
\put(51,25){$B$}
\put(34,44){$s_1$}
\put(59,27){$s_2$}
\put(46,55){$A$}
\put(44.5,43){$\alpha$}
\put(52,46){$\beta$}
\put(55.2,53){$\frac{\pi}{3}$}
\put(64.5,49){$\gamma$}
\put(80,57){$u$}
\put(73,28){${\geq}\frac{3\pi}{4}$}
\put(49.2,-1.1){\footnotesize0}
\put(90,65){$\mathbb{H}$}
\end{overpic}
\caption{Possible segments of $\mathcal{A}_{BA^{-1}}$ for $BA^{-1}\in\Gamma(3,q,r)$, $q\geq3,r\geq4$}
\label{3qqdom}
\end{figure}

\begin{figure}
\begin{overpic}[scale=.4]{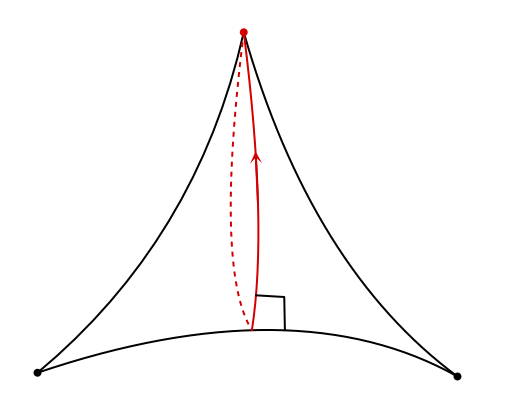}
\put(45,73){$3$}
\put(91.7,1){$q$}
\put(1,1.5){$q$}
\end{overpic}
\caption{$\pi(\mathcal{A}_{BA^{-1}})\subset\mathcal{O}(3,q,q)$ for $q\geq 4$}
\label{3qq}
\end{figure}

\indent If $q=3$, $BC^{-1}=BAB\sim B^2A=B^{-1}A\sim AB^{-1}=(BA^{-1})^{-1}$, which is hyperbolic by the above discussion.  Thus, $BC^{-1}$ is hyperbolic and $\pi(\mathcal{A}_{BC^{-1}})$ is also a figure eight geodesic bounding the orbifold points of order three, but with opposite orientation as $\pi(\mathcal{A}_{BA^{-1}})$.  If $q\geq4$, then necessarily $r\geq4$, and a similar analysis to cases above (e.g. $BC^{-1}\in\Gamma(2,q,r)$ for $q,r\geq5$) shows that $BC^{-1}$ is hyperbolic and $\pi(\mathcal{A}_{BC^{-1}})$ is the figure eight geodesic bounding the orbifold points of orders $q$ and $r$. 

\end{component}

\begin{component}[Case (iii): $\Gamma(p,q,r)$ for $p,q,r\geq4$] By choosing the appropriate fundamental domains for each of the three isometries, one sees that each isometry is hyperbolic and the axis of each isometry must pass through opposite sides of the quadrilateral disjoint from the vertices.  As in above cases, this is done by showing any other proposed common orthogonal will form a polygon whose interior angle sum is too large.
\end{component}\end{proof}

Theorem \ref{classthm} yields a classification of figure eight geodesics on triangle group orbifolds.  In addition, we can deduce two corollaries.

A triangle group is called \textit{exceptional} if it has the form
$\Gamma(2,3,r)$ or $\Gamma(2,4,r)$.  Note that $\Gamma(2,3,r)$ is a triangle group if and only if $r\geq7$ and $\Gamma(2,4,r)$ is a triangle group if and only if $r\geq5$.

\begin{cor}
\label{exceptg}
If $\Gamma(p,q,r)$ is not an exceptional triangle group, then $\mathcal{O}(p,q,r)$ contains a figure eight geodesic.
\end{cor}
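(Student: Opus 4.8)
The plan is to read off the existence of a figure eight geodesic directly from the classification in Theorem \ref{classthm}, organized as a case analysis on the value of $p$ under the standing normalization $2\le p\le q\le r\le\infty$. In each case it suffices to exhibit a single one of the three curves $\pi(\mathcal{A}_{BA^{-1}})$, $\pi(\mathcal{A}_{BC^{-1}})$, $\pi(\mathcal{A}_{AC^{-1}})$ that Theorem \ref{classthm} declares to be a figure eight geodesic; since we need only existence, there is no need to examine the remaining free homotopy classes.

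First I would dispose of the generic case $p\ge 4$. Then $p,q,r\ge 4$, and part $(iii)$ of Theorem \ref{classthm} immediately gives that $\pi(\mathcal{A}_{BA^{-1}})$ is the figure eight geodesic bounding the orbifold points of orders $p$ and $q$, so $\mathcal{O}(p,q,r)$ contains one. Next I would treat $p=3$. Here the hyperbolicity condition $\tfrac1p+\tfrac1q+\tfrac1r<1$ forces $r\ge 4$ (for $q=3$ one has $\tfrac13+\tfrac13+\tfrac1r<1\Rightarrow r\ge 4$, and for $q\ge 4$ it is automatic), so part $(ii)$ applies. If $q\ne r$, part $(ii)$ says $\pi(\mathcal{A}_{BA^{-1}})$ is the figure eight geodesic bounding the orbifold points of orders $3$ and $q$, and we are done.

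The one place requiring care, and the only subtle point in the argument, is the subcase $p=3$ with $q=r$: there $\pi(\mathcal{A}_{BA^{-1}})$ is the loop through the order three orbifold point of Figure \ref{3qq}, which is \emph{not} a figure eight geodesic. However $q=r$ together with $r\ge 4$ forces $q\ge 4$, and then part $(ii)$ guarantees that $\pi(\mathcal{A}_{BC^{-1}})$ \emph{is} the figure eight geodesic bounding the orbifold points of orders $q$ and $r$. Thus switching from $BA^{-1}$ to $BC^{-1}$ resolves this subcase, the switch being available precisely because $q=r\ge 4$.

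Finally I would handle $p=2$. Non-exceptionality excludes $q=3$ and $q=4$, and since $\Gamma(2,2,r)$ is never hyperbolic (as $\tfrac12+\tfrac12+\tfrac1r>1$) we have $q\ge 5$. Part $(i)$ then gives that $\pi(\mathcal{A}_{BC^{-1}})$ is the figure eight geodesic bounding the orbifold points of orders $q$ and $r$. Collecting the three cases shows that every non-exceptional $\mathcal{O}(p,q,r)$ contains a figure eight geodesic, which is the assertion. The entire argument is bookkeeping over the cases of Theorem \ref{classthm}; the only thing one must not overlook is the $p=3$, $q=r$ subcase, where the naive choice $BA^{-1}$ fails and $BC^{-1}$ must be used instead.
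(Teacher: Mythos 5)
Your proposal is correct and is essentially the paper's own argument: the paper deduces the corollary directly from Theorem \ref{classthm} without further comment, and your case analysis (including the correct handling of the $p=3$, $q=r$ subcase, where $BA^{-1}$ projects to the non-figure-eight loop through the order three point and one must pass to $BC^{-1}$) is exactly the bookkeeping that deduction requires.
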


\begin{cor}
Let $\mathcal{C}_{pq}, \mathcal{C}_{qr}$, and $\mathcal{C}_{pr}$ be the free $\mathcal{O}(p,q,r)$-homotopy classes described at the beginning of this section.  If these free $\mathcal{O}(p,q,r)$-homotopy classes contain a closed geodesic, then that geodesic will have one of the following three forms:
\begin{itemize}
\item[(i)]
a figure eight geodesic
\item[(ii)]
the geodesic path from an orbifold point of order two to an orbifold point of order four and back again.
\item[(iii)]
the geodesic passing through the order three orbifold point pictured in Figure \ref{3qq}.
\end{itemize}
\end{cor}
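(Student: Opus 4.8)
The plan is to derive this corollary directly from the classification in Theorem \ref{classthm}, since the three homotopy classes in question are exactly the ones analyzed there. Recall from the start of the section that $\Phi([\mathcal{C}_{pq}])=BA^{-1}$, $\Phi([\mathcal{C}_{qr}])=BC^{-1}$, and $\Phi([\mathcal{C}_{pr}])=AC^{-1}$. By Theorem \ref{uniquegeodthm}, one of these free homotopy classes contains a closed geodesic precisely when the corresponding group element is hyperbolic; when the element is elliptic or parabolic the class instead has a simple representative bounding an orbifold point or a puncture, and so contributes no closed geodesic. Thus I would reduce the statement to checking, case by case in Theorem \ref{classthm}, that whenever $BA^{-1}$, $BC^{-1}$, or $AC^{-1}$ is hyperbolic, the projection of its axis is one of the three listed forms.

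First I would record the hyperbolic instances in case (i), $\Gamma(2,q,r)$: here $BA^{-1}$ and $AC^{-1}$ are always elliptic or parabolic, so only $BC^{-1}$ can be hyperbolic, and that happens exactly for $q\geq4$. For $q=4$ the theorem gives the order-two-to-order-four path, which is form (ii), while for $q\geq5$ it gives a figure eight, which is form (i). Next, in case (ii), $\Gamma(3,q,r)$ with $r\geq4$, the conjugate pair $BA^{-1}\sim AC^{-1}$ is hyperbolic and projects to the Figure \ref{3qq} geodesic through the order-three point (form (iii)) when $q=r$, and to a figure eight (form (i)) otherwise; and $BC^{-1}$ is hyperbolic and projects to a figure eight in every subcase, so it always gives form (i) (when $q=3$ it is the reverse of the figure eight $\pi(\mathcal{A}_{BA^{-1}})$, which is legitimate since $r\geq4$ forces $q\neq r$). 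Finally, in case (iii), $\Gamma(p,q,r)$ with $p,q,r\geq4$, all three elements are hyperbolic and all project to figure eight geodesics, form (i).

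Since the normalization $2\leq p\leq q\leq r\leq\infty$ together with the hyperbolicity condition $\frac{1}{p}+\frac{1}{q}+\frac{1}{r}<1$ places every triangle group into exactly one of these three cases, the enumeration above is exhaustive and the corollary follows. I expect the only real work to be bookkeeping rather than new geometry: one must confirm that no hyperbolic occurrence has been missed and that the boundary conditions are handled correctly, in particular that $p=3$ forces $r\geq4$, so that the subcase $q=3$ in case (ii) always has $q\neq r$ and hence produces a figure eight rather than the order-three geodesic of Figure \ref{3qq}. No estimates beyond those already established in Theorem \ref{classthm} are needed.
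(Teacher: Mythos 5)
Your proposal is correct and follows exactly the route the paper intends: the corollary is stated without a separate proof because it is read off case by case from Theorem \ref{classthm}, which is precisely the bookkeeping you carry out (including the correct observation that in $\Gamma(3,q,r)$ the constraint $r\geq4$ forces $q\neq r$ when $q=3$, so the $BC^{-1}$ subcase yields a figure eight). Nothing is missing.
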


\subsection{The Shortest Figure Eight Geodesics}
Now that we have identified the figure eight geodesics on triangle group orbifolds we set out to find the shortest one.  Suppose $\Gamma$ is a Fuchsian group and let $\pi:\mathbb{H}\rightarrow\mathbb{H}/\Gamma$ be the projection map.  If $g\in\Gamma$ is hyperbolic then the \textit{length of the closed geodesic $\pi(\mathcal{A}_g)$} on $\mathbb{H}/\Gamma$ is the translation length $\mathcal{T}_g$ of $g$.  Recall the following well-known formula for a hyperbolic element $g\in$ PSL$(2,\mathbb{R})$ (see \cite{AB1}, for example).
\begin{equation}
\label{trtrl}
|tr(g)|=2\cosh(\mathcal{T}_g/2)
\end{equation}
Thus, $|tr(g)|$ and $\mathcal{T}_g$ have a direct relationship.  We will use the absolute value of the trace in proofs, but we state our results in terms of hyperbolic length.  From now on we will write $|$trace$|$ in place of the phrase ``absolute value of the trace."  In all of the proofs below we use the normalized triangle group $\Gamma(p,q,r)$ as in Section \ref{tgsection} and we assume $2\leq p\leq q \leq r\leq\infty$.  In addition, for the remainder of this article unless otherwise stated, closed geodesics are taken to be unoriented.

\begin{lem}
\label{lem1}
The shortest figure eight geodesic on a triangle group orbifold without punctures is the unique one on $\mathcal{O}(3,3,4)$ which bounds the orbifold points of order three and has length $$2\cosh^{-1}\left(\frac{1+\sqrt{2}}{2}\right)=1.26595\dots.$$
\end{lem}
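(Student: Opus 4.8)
The plan is to replace ``shortest'' by ``smallest $|$trace$|$'': by (\ref{trtrl}) the length of $\pi(\mathcal{A}_g)$ is a strictly increasing function of $|tr(g)|$ on the hyperbolic range $|tr(g)|>2$, so it suffices to minimize $|tr(g)|$ over every element $g$ whose axis projects to a figure eight geodesic. Theorem \ref{classthm} supplies the complete list of such $g$: up to conjugacy and reversal of orientation, each figure eight geodesic on a triangle group orbifold without punctures is $\pi(\mathcal{A}_g)$ for some $g\in\{BA^{-1},BC^{-1},AC^{-1}\}$ in a group $\Gamma(p,q,r)$ with $2\le p\le q\le r<\infty$, and the existence of the figure eight in each position is governed by the trichotomy (i)--(iii).

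First I would compute the three relevant traces in closed form. Using the identity $tr(X^{-1}Y)=tr(X)tr(Y)-tr(XY)$ already exploited in Lemma \ref{BAlem}, together with $tr(A)=2\cos(\frac{\pi}{p})$, $tr(B)=2\cos(\frac{\pi}{q})$, and the relations $tr(AB)=tr(C^{-1})=tr(C)=-2\cos(\frac{\pi}{r})$, $tr(AC)=tr(B)$, and $tr(BC)=tr(A)$, which follow from $C=B^{-1}A^{-1}$ and (\ref{trC}), one finds
\begin{align*}
|tr(BA^{-1})| &= 4\cos(\tfrac{\pi}{p})\cos(\tfrac{\pi}{q})+2\cos(\tfrac{\pi}{r}),\\
|tr(AC^{-1})| &= 4\cos(\tfrac{\pi}{p})\cos(\tfrac{\pi}{r})+2\cos(\tfrac{\pi}{q}),\\
|tr(BC^{-1})| &= 4\cos(\tfrac{\pi}{q})\cos(\tfrac{\pi}{r})+2\cos(\tfrac{\pi}{p}).
\end{align*}
The structural point, which makes the whole argument elementary, is that these expressions have only positive coefficients and depend on $(p,q,r)$ solely through the quantities $\cos(\frac{\pi}{m})$; in particular the auxiliary parameter $\lambda$ has cancelled. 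Since $\cos(\frac{\pi}{m})$ strictly increases with $m$, each trace strictly increases in each of $p$, $q$, and $r$.

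This monotonicity reduces the problem to a finite comparison: within each case of Theorem \ref{classthm} the shortest figure eight occurs at the componentwise-smallest admissible signature. In case (i) ($p=2$) only $\pi(\mathcal{A}_{BC^{-1}})$ is a figure eight, and only for $q\ge5$; its minimum is at $\mathcal{O}(2,5,5)$ with $|tr|=4\cos^2(\frac{\pi}{5})$. In case (ii) ($p=3$) the subcase $q=3$ gives $|tr(BA^{-1})|=1+2\cos(\frac{\pi}{r})$, minimized at $\mathcal{O}(3,3,4)$ with value $1+\sqrt2$; the subcases $q\ge4$ produce the $BA^{-1}$ figure eight only when $q\ne r$ (minimum at $\mathcal{O}(3,4,5)$) and the $BC^{-1}$ figure eight (minimum at $\mathcal{O}(3,4,4)$, value $3$), both at least $3$. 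In case (iii) ($p,q,r\ge4$) the extremal group is $\mathcal{O}(4,4,4)$, with $|tr|=2+\sqrt2$. Comparing these finitely many numbers via $1+\sqrt2<4\cos^2(\frac{\pi}{5})<3<2+\sqrt2$ shows that $1+\sqrt2$ is the strict global minimum, attained only by the figure eight on $\mathcal{O}(3,3,4)$ that bounds the two order-three points; converting through (\ref{trtrl}) gives the length $2\cosh^{-1}(\frac{1+\sqrt2}{2})$, and strictness yields uniqueness.

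I expect the main obstacle to be bookkeeping rather than estimation: one must match each algebraic element to the geometric object Theorem \ref{classthm} assigns it and, crucially, discard the positions in which the element is elliptic or parabolic (case (i) with $q\le4$, and the through-a-point geodesics of case (ii) when $q=r$), so that only genuine figure eight geodesics enter the comparison. Once the admissible extremal signatures are pinned down, the positivity and monotonicity of the three trace formulas make the remaining inequalities immediate, completing the proof.
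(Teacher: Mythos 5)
Your proposal is correct and follows essentially the same route as the paper: the same Cayley--Hamilton trace formulas for $BA^{-1}$, $BC^{-1}$, $AC^{-1}$, the same monotonicity of each trace in $p,q,r$, and the same reduction to comparing the extremal signatures $\mathcal{O}(2,5,5)$ and $\mathcal{O}(3,3,4)$ via Theorem \ref{classthm}. Your case split is slightly finer (the paper lumps all $p,q\geq 3$, $r\geq 4$ into one monotonicity argument anchored at $\Gamma(3,3,4)$), but the substance is identical.
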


\begin{proof}
We need only consider those elements $BA^{-1}, BC^{-1}$, and $AC^{-1}$ in $\Gamma(p,q,r)$ which have an axis that projects to a figure eight geodesic (see Theorem \ref{classthm}).  Recalling the trace identity derived from the Cayley-Hamilton Theorem above, (\ref{treqn}), we have:
\begin{eqnarray}
\label{BAtrace}
|tr(BA^{-1})|&=&2\left[2\cos\left(\frac{\pi}{p}\right)\cos\left(\frac{\pi}{q}\right)+\cos\left(\frac{\pi}{r}\right)\right],\\
\label{BCtrace}
|tr(BC^{-1})|&=&2\left[2\cos\left(\frac{\pi}{q}\right)\cos\left(\frac{\pi}{r}\right)+\cos\left(\frac{\pi}{p}\right)\right],\\
\label{ACtrace}
|tr(AC^{-1})|&=&2\left[2\cos\left(\frac{\pi}{p}\right)\cos\left(\frac{\pi}{r}\right)+\cos\left(\frac{\pi}{q}\right)\right].
\end{eqnarray}

To find the smallest $|$trace$|$ we consider two cases.  First, for each triangle group of the form $\Gamma(2,q,r)$, the only element which has an axis projecting to a figure eight geodesic is $BC^{-1}$, and only if $q,r\geq5$. From (\ref{BCtrace}) we see that $|tr(BC^{-1})|=4\cos(\frac{\pi}{q})\cos(\frac{\pi}{r})$ where $BC^{-1}\in\Gamma(2,q,r)$ for $q,r\geq5$, and thus $|tr(BC^{-1})|$ will increase as $q$ and/or $r$ increase.  It follows that $BC^{-1}\in\Gamma(2,5,5)$ has the smallest $|$trace$|$, namely $4\cos^2(\frac{\pi}{5})=2.618\dots$.\\
\indent Next, we look at triangle groups of the form $\Gamma(p,q,r)$ for $p,q\geq 3$ and $r\geq4$.  This includes all remaining triangle groups with elements having axes projecting to figure eight geodesics.  We note that all three elements ($BA^{-1}, BC^{-1}$, and $AC^{-1}$) in $\Gamma(3,3,4)$ have the same $|$trace$|$, $1+\sqrt{2}=2.414\dots$.  Observing (\ref{BAtrace})-(\ref{ACtrace}), we see that as any of $p,q,$ and/or $r$ increase, the $|$trace$|$ will increase.  Thus $1+\sqrt{2}$ is the smallest $|$trace$|$ in this case.\\
\indent Comparing the results of the two cases we find that the unique figure eight geodesic on $\mathcal{O}(3,3,4)$ is the shortest.  Using that the $|$trace$|$ of the corresponding hyperbolic elements is $1+\sqrt{2}$, we get the resulting hyperbolic length of the figure eight geodesic using (\ref{trtrl}).
\end{proof}

Similar arguments to those used in the proof of Lemma \ref{lem1} can be used to establish the next three lemmas.  We note that Lemma \ref{lem4} is already well-known \cite{PB}, \cite{AY}.

\begin{lem}
\label{lem2}
The shortest figure eight geodesic on a triangle group orbifold with one puncture is the unique one on $\mathcal{O}(3,3,\infty)$ which bounds the orbifold points of order three and has length $$2\cosh^{-1}\left(\frac{3}{2}\right)=1.92485\dots.$$
\end{lem}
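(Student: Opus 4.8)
The plan is to mimic the case analysis of Lemma \ref{lem1}, specializing the trace formulas (\ref{BAtrace})--(\ref{ACtrace}) to the one-puncture situation. A triangle group orbifold with exactly one puncture is $\mathcal{O}(p,q,\infty)$ with $2\le p\le q<\infty$ (the lone $\infty$ must be $r$ by the normalization $p\le q\le r$), so throughout I set $r=\infty$, which makes $\cos(\pi/r)=1$. First I would record the three specialized $|$trace$|$ values: $|tr(BA^{-1})|=2[2\cos(\pi/p)\cos(\pi/q)+1]$, $|tr(BC^{-1})|=2[2\cos(\pi/q)+\cos(\pi/p)]$, and $|tr(AC^{-1})|=2[2\cos(\pi/p)+\cos(\pi/q)]$. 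Using Theorem \ref{classthm} I would then pin down exactly which of these elements have an axis projecting to a figure eight geodesic, since only those are relevant to the minimization.

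Next I would split into the same three cases as before. For $\Gamma(2,q,\infty)$ (so $p=2$, which forces $q\ge5$ before any figure eight appears) only $BC^{-1}$ qualifies, and since $\cos(\pi/2)=0$ its $|$trace$|$ reduces to $4\cos(\pi/q)$; this increases in $q$ and is minimized at $q=5$ with value $4\cos(\pi/5)\approx3.236$. For $\Gamma(p,q,\infty)$ with $p,q\ge4$, all three elements are hyperbolic and yield figure eight geodesics (Theorem \ref{classthm}($iii$)), and a glance at the specialized formulas shows the smallest such $|$trace$|$ is the value $4$, attained by $BA^{-1}$ at $p=q=4$. The decisive case is $\Gamma(3,q,\infty)$: here $q\ne r$ holds automatically (as $q$ is finite while $r=\infty$), so $BA^{-1}\sim AC^{-1}$ always produces the figure eight bounding the orbifold points of orders $3$ and $q$, with $|$trace$|$ equal to $2[\cos(\pi/q)+1]$, a quantity increasing in $q$ and hence minimized at $q=3$, where it equals $3$; the companion element $BC^{-1}$ (relevant only for $q\ge4$) gives a strictly larger value.

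Finally, comparing the three cases shows that $3$ is the smallest $|$trace$|$ among all figure-eight-producing elements and that it is attained only on $\mathcal{O}(3,3,\infty)$, where $BA^{-1}$, $AC^{-1}$, and $(BC^{-1})^{-1}$ all represent the single figure eight geodesic bounding the two order-three points. Converting via (\ref{trtrl}) then yields the length $2\cosh^{-1}(3/2)=1.92485\dots$. I expect the main obstacle to be purely bookkeeping rather than analytic: carefully confirming from Theorem \ref{classthm} precisely which elements give figure eight geodesics once $r=\infty$, and in particular checking that the $q\ne r$ hypothesis of case ($ii$) is vacuously satisfied, so that the minimizing geodesic on $\mathcal{O}(3,3,\infty)$ really is a figure eight and not a loop through an orbifold point.
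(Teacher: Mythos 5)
Your proposal is correct and follows exactly the route the paper intends: the paper gives no separate proof of this lemma, stating only that ``similar arguments to those used in the proof of Lemma \ref{lem1}'' apply, and your specialization of the trace formulas to $r=\infty$ (so $\cos(\pi/r)=1$) together with the same three-case analysis is precisely that argument. All the computed values check out ($4\cos(\pi/5)\approx 3.236$, $4$, and the minimizer $2[\cos(\pi/3)+1]=3$ on $\Gamma(3,3,\infty)$), and your observation that $q\neq r$ holds automatically so that case ($ii$) of Theorem \ref{classthm} yields a genuine figure eight is the right point to verify.
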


\begin{lem}
\label{lem3}
The shortest figure eight geodesic on a triangle group orbifold with two punctures is the unique one on $\mathcal{O}(2,\infty,\infty)$ which bounds the punctures and has length $$2\cosh^{-1}(2)=2.63392\dots.$$
\end{lem}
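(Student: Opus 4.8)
The plan is to follow the proof of Lemma~\ref{lem1} almost verbatim, specializing the trace formulas (\ref{BAtrace})--(\ref{ACtrace}) to the regime that produces two punctures. Under the standing normalization $2\le p\le q\le r\le\infty$, the orbifold $\mathcal{O}(p,q,r)$ has exactly two punctures precisely when $q=r=\infty$ while $p<\infty$, so the groups to consider are $\Gamma(p,\infty,\infty)$ with $2\le p<\infty$. In this limit $\cos(\pi/q)$ and $\cos(\pi/r)$ both tend to $1$, so the three candidate $|$trace$|$ values collapse to $|tr(BA^{-1})|=|tr(AC^{-1})|=2\left[2\cos(\pi/p)+1\right]$ and $|tr(BC^{-1})|=2\left[2+\cos(\pi/p)\right]$.

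First I would use Theorem~\ref{classthm} to record which of these elements actually projects to a figure eight geodesic for each $p$, treating the two infinite entries as orbifold points of order $\ge5$. The element $BC^{-1}$ always does: in each of cases (i)--(iii), $\pi(\mathcal{A}_{BC^{-1}})$ is the figure eight bounding the orbifold points of orders $q$ and $r$, which here are exactly the two punctures. For $p=2$ the relevant entry of case (i) requires $q,r\ge5$, and this holds since $q=r=\infty$; for $p=2,3$ the remaining elements $BA^{-1},AC^{-1}$ are parabolic, or (in case (ii) with $q=r=\infty$) trace the loop through the order three point of Figure~\ref{3qq}, so they contribute no figure eight. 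Only for $p\ge4$ do $BA^{-1}$ and $AC^{-1}$ also yield figure eight geodesics, each bounding the order $p$ point together with one puncture.

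Next I would minimize $|$trace$|$ over the admissible elements by the same elementary monotonicity used in Lemma~\ref{lem1}. The puncture-bounding family has $|tr(BC^{-1})|=2[2+\cos(\pi/p)]$, which increases with $p$ because $\cos(\pi/p)$ is increasing, so it is least at $p=2$, where it equals $4$. The mixed family, present only for $p\ge4$, has $|tr(BA^{-1})|=|tr(AC^{-1})|=2[2\cos(\pi/p)+1]$, likewise increasing in $p$, hence least at $p=4$, where it equals $2+2\sqrt2\approx4.83>4$. Comparing the two families shows the overall minimum $|$trace$|$ is $4$, attained uniquely by $BC^{-1}\in\Gamma(2,\infty,\infty)$; converting with (\ref{trtrl}) gives length $2\cosh^{-1}(4/2)=2\cosh^{-1}(2)$, and uniqueness of the geodesic in its class follows from Theorem~\ref{uniquegeodthm}.

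The only genuinely delicate point is the bookkeeping against Theorem~\ref{classthm}: one must confirm that for small $p$ no figure eight is overlooked, and in particular that for $p=2$ the hypotheses of case (i) are met in the limit so that $BC^{-1}$ is hyperbolic with figure eight projection. Once the admissible elements are pinned down, the optimization is routine.
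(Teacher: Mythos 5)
Your proposal is correct and is essentially the argument the paper intends: the paper proves Lemma~\ref{lem3} only by remarking that it follows from ``similar arguments to those used in the proof of Lemma~\ref{lem1},'' and your specialization of the trace formulas (\ref{BAtrace})--(\ref{ACtrace}) to $\Gamma(p,\infty,\infty)$, the bookkeeping against Theorem~\ref{classthm}, and the monotonicity-in-$p$ comparison are exactly that argument carried out in detail. No gaps.
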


\begin{lem}
\label{lem4}
$\mathcal{O}(\infty,\infty,\infty)$ contains three figure eight geodesics all having length $$2\cosh^{-1}(3)=4\ln(1+\sqrt{2})=3.52549\dots.$$
\end{lem}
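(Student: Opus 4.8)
The plan is to follow the proof of Lemma \ref{lem1} verbatim, now specialized to the all-puncture group $\Gamma(\infty,\infty,\infty)=\langle A,B\rangle$ with $A=\left({\begin{smallmatrix}1&2\\0&1\end{smallmatrix}}\right)$ and $B=\left({\begin{smallmatrix}1&0\\-2&1\end{smallmatrix}}\right)$. By the discussion preceding Theorem \ref{classthm}, every figure eight geodesic on $\mathcal{O}(\infty,\infty,\infty)$ lies in one of the three classes $\mathcal{C}_{pq},\mathcal{C}_{qr},\mathcal{C}_{pr}$ (or an orientation reverse), carried respectively by $BA^{-1}$, $BC^{-1}$, and $AC^{-1}$ with $C=B^{-1}A^{-1}$; so it suffices to analyze these three elements and compute their $|$trace$|$.

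First I would compute the three traces. The cleanest route is to invoke the trace formulas (\ref{BAtrace})--(\ref{ACtrace}), which come from the purely algebraic Cayley--Hamilton identity (\ref{treqn}) and therefore remain valid here, and specialize to $p=q=r=\infty$ using $\cos(\pi/\infty)=\cos 0=1$: each of the three right-hand sides collapses to $2[\,2\cdot 1\cdot 1+1\,]=6$. Equivalently one can multiply out the integer matrices directly, obtaining the parabolic $C=B^{-1}A^{-1}=\left({\begin{smallmatrix}1&-2\\2&-3\end{smallmatrix}}\right)$ (so $tr(C)=-2$, confirming the third puncture) together with $BA^{-1}=\left({\begin{smallmatrix}1&-2\\-2&5\end{smallmatrix}}\right)$, $BC^{-1}=\left({\begin{smallmatrix}-3&2\\4&-3\end{smallmatrix}}\right)$, and $AC^{-1}=\left({\begin{smallmatrix}-7&4\\-2&1\end{smallmatrix}}\right)$, each of determinant one and each with $|$trace$|=6$.

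Since $6>2$, all three elements are hyperbolic, and by the limiting form of case ($iii$) of Theorem \ref{classthm} (reading every order-$\infty$ point as a puncture) their axes project to the three figure eight geodesics, each bounding a pair of punctures. Applying (\ref{trtrl}) with $|tr|=6$ gives $2\cosh(\mathcal{T}_g/2)=6$, hence $\mathcal{T}_g=2\cosh^{-1}(3)$ for all three. To obtain the stated closed form I would note $\cosh^{-1}(3)=\ln(3+\sqrt{8})=\ln(3+2\sqrt{2})$ and that $(1+\sqrt{2})^2=3+2\sqrt{2}$, so $\cosh^{-1}(3)=2\ln(1+\sqrt{2})$ and $\mathcal{T}_g=4\ln(1+\sqrt{2})=3.52549\dots$. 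The argument is entirely computational, so there is no analytic obstacle; the only points deserving care are checking that the trace formulas of Lemma \ref{lem1} survive the limit $p,q,r\to\infty$ (equivalently, verifying the matrix products above) and confirming that Theorem \ref{classthm} still isolates exactly these three curves as the figure eight geodesics in the fully punctured case, which holds because the three puncture loops are represented by the parabolics $A,B,C$ while $BA^{-1},BC^{-1},AC^{-1}$ are hyperbolic.
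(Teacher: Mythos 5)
Your proposal is correct and is exactly the argument the paper intends: the paper omits an explicit proof of Lemma \ref{lem4}, stating only that ``similar arguments to those used in the proof of Lemma \ref{lem1}'' apply (and citing Buser and Yamada for the known result), and your computation --- specializing the trace identities (\ref{BAtrace})--(\ref{ACtrace}) to $p=q=r=\infty$, verifying via the explicit integer matrices that $|tr(BA^{-1})|=|tr(BC^{-1})|=|tr(AC^{-1})|=6$, invoking case ($iii$) of Theorem \ref{classthm} to identify the three projected axes as the figure eight geodesics, and converting via (\ref{trtrl}) --- is precisely that argument carried out. The matrix products, determinants, and the identity $\cosh^{-1}(3)=2\ln(1+\sqrt{2})$ all check out.
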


\begin{thm}
The shortest figure eight geodesic on a triangle group orbifold containing at least one puncture is the unique one on $\mathcal{O}(3,3,\infty)$ which bounds the orbifold points of order three and has length $$2\cosh^{-1}\left(\frac{3}{2}\right)=1.92485\dots.$$
\end{thm}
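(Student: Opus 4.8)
The plan is to reduce the statement to a direct comparison among the three preceding lemmas. First I would observe that a triangle group orbifold $\mathcal{O}(p,q,r)$ contains a puncture precisely when one of its indices equals $\infty$; under the standing normalization $2\leq p\leq q\leq r\leq\infty$, the number of punctures equals the number of indices equal to $\infty$, and so is $1$, $2$, or $3$. These three possibilities are governed exactly by Lemmas \ref{lem2}, \ref{lem3}, and \ref{lem4}, respectively, each of which supplies the shortest figure eight geodesic (or, in the three-puncture case, the common length of all such geodesics) on the triangle group orbifolds in its class.

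Next I would collect the three candidate lengths:
\begin{align*}
\text{one puncture:} \quad & 2\cosh^{-1}\left(\tfrac{3}{2}\right)=1.92485\dots,\\
\text{two punctures:} \quad & 2\cosh^{-1}(2)=2.63392\dots,\\
\text{three punctures:} \quad & 2\cosh^{-1}(3)=3.52549\dots.
\end{align*}
Since $\cosh^{-1}$ is strictly increasing on $[1,\infty)$ and $\tfrac{3}{2}<2<3$, we obtain
$$2\cosh^{-1}\left(\tfrac{3}{2}\right)<2\cosh^{-1}(2)<2\cosh^{-1}(3).$$
Hence the global minimum over all punctured triangle group orbifolds is attained in the one-puncture case, namely by the figure eight geodesic on $\mathcal{O}(3,3,\infty)$ bounding the two orbifold points of order three, of length $2\cosh^{-1}(3/2)=1.92485\dots$; uniqueness follows from the uniqueness already asserted in Lemma \ref{lem2}.

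There is essentially no analytic obstacle here: the entire content lives in Lemmas \ref{lem2}--\ref{lem4}, and once those are in hand the theorem is a one-line monotonicity comparison. The only point requiring a word of care is the bookkeeping that the three lemmas genuinely exhaust all triangle group orbifolds with at least one puncture, which is immediate from the normalization $p\leq q\leq r$, since any $\infty$ entry must occupy a trailing slot and therefore the count of $\infty$'s lands in exactly one of the cases $1$, $2$, or $3$.
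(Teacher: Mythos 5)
Your proposal is correct and takes essentially the same route as the paper, which treats this theorem as an immediate consequence of Lemmas \ref{lem2}--\ref{lem4} without writing out the comparison. You have simply made explicit the two small points the paper leaves tacit: that the puncture count $1$, $2$, or $3$ exhausts all punctured triangle group orbifolds, and that the monotonicity of $\cosh^{-1}$ gives $2\cosh^{-1}\left(\tfrac{3}{2}\right)<2\cosh^{-1}(2)<2\cosh^{-1}(3)$.
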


\begin{proof}[Proof of Theorem 1.1]
Lemmas \ref{lem1}-\ref{lem4} establish the claim.
\end{proof}

\begin{rem}
Among those triangle groups containing figure eight geodesics, the one containing the shortest figure eight geodesic, $\mathcal{O}(3,3,4)$, has the smallest area: $\frac{\pi}{6}$.
\end{rem}

\section{Figure Eight Geodesics on 2-Orbifolds}
With Theorem \ref{shortfig8tg} established we look to generalize by finding the shortest figure eight geodesic on any 2-orbifold without orbifold points of order two.  We begin with a proposition which highlights one of the reasons order two orbifold points are exceptional.

\begin{prop}(\cite{BM},Chapter V, Proposition G.9)
\label{Maskitlem}
 Let $\Gamma$ be a Fuchsian group and let~$\pi:\mathbb{H}\rightarrow\mathbb{H}/\Gamma$ be the projection map.  Suppose $\alpha$ is an essential simple loop on $\mathbb{H}/\Gamma$ which is disjoint from the orbifold points and is freely $\mathbb{H}/\Gamma$-homotopic to a closed geodesic.  If $\alpha$ does not bound a disc containing exactly two orbifold points both of order two, then the closed geodesic corresponding to $\alpha$ is also simple. If $\alpha$ does bound a disc containing exactly two orbifold points of order two, then the corresponding closed geodesic is the path which runs from one orbifold point of order two to the other orbifold point of order two, and back again.
\end{prop}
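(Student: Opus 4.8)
The plan is to show that the simplicity of $\alpha$ forces the axis $\mathcal{A}_g$ of the hyperbolic element $g=\Phi([\alpha])$ to have no transverse crossings with its $\Gamma$-conjugate axes, and then to analyze separately whether $\mathcal{A}_g$ meets the elliptic fixed points lying over $\Sigma$. First I would fix a basepoint and set $g=\Phi([\alpha])$, which is hyperbolic because $\alpha$ is freely homotopic to a closed geodesic (Theorem \ref{uniquegeodthm}(1)); the geodesic in question is then $\gamma=\pi(\mathcal{A}_g)$. Since $\alpha$ is simple and disjoint from the orbifold points, and since $\pi$ restricts to an honest covering over $\mathbb{H}/\Gamma-\Sigma$, the full preimage $\pi^{-1}(\alpha)$ is a $\Gamma$-invariant family of pairwise disjoint, properly embedded lines in $\mathbb{H}$. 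The component $\widetilde{\alpha}$ stabilized by $g$ is a quasigeodesic sharing its two endpoints on $\partial\mathbb{H}$ with $\mathcal{A}_g$ (the fixed points of $g$), and for every $h\in\Gamma$ the component $h\widetilde{\alpha}$ shares its endpoints with $h\mathcal{A}_g=\mathcal{A}_{hgh^{-1}}$. Two disjoint properly embedded lines have non-linking endpoints, so if $h\mathcal{A}_g\neq\mathcal{A}_g$ then $\mathcal{A}_g$ and $h\mathcal{A}_g$ cannot cross transversely; hence the self-intersection number of $\gamma$ is zero.

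Next I would determine whether $\mathcal{A}_g$ passes through a point of $\pi^{-1}(\Sigma)$, that is, a fixed point $p$ of some elliptic $e$ of order $m$. If $m\geq3$, then $e$ rotates $\mathcal{A}_g$ about $p$ through the angle $2\pi/m\in(0,\pi)$, so $e\mathcal{A}_g=\mathcal{A}_{ege^{-1}}$ crosses $\mathcal{A}_g$ transversely at $p$, contradicting the previous paragraph; thus no orbifold point on $\mathcal{A}_g$ can have order $\geq3$. If $m=2$, the half-turn $e_1$ about $p_1\in\mathcal{A}_g$ merely reverses $\mathcal{A}_g$, creating no transverse crossing. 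The key computation is that $e_2\defeq e_1 g$ satisfies $(e_1 g)^2=e_1 g e_1 g=g^{-1}g=I$ (using $e_1 g e_1=g^{-1}$, since $e_1$ reverses the axis), so $e_2$ is a second half-turn whose fixed point $p_2$ again lies on $\mathcal{A}_g$ at distance $\mathcal{T}_g/2$ from $p_1$, and $g=e_1 e_2$. Projecting, $\gamma$ is precisely the geodesic segment from $\pi(p_1)$ to $\pi(p_2)$ folded back on itself at the two order-two points, which is the back-and-forth path in the statement.

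Finally I would assemble the two cases by matching them to the topological hypothesis. When $\mathcal{A}_g$ avoids $\pi^{-1}(\Sigma)$, the geodesic $\gamma$ is disjoint from the orbifold points and, having self-intersection number zero, is simple. When $\mathcal{A}_g$ meets $\pi^{-1}(\Sigma)$, the previous paragraph shows it meets exactly two $\Gamma$-orbits of order-two fixed points with $g=e_1e_2$; a regular neighborhood of the folded path $\gamma$ is then a disc containing precisely the two order-two orbifold points $\pi(p_1),\pi(p_2)$, and since $\alpha$ is a simple loop freely homotopic to its boundary it is isotopic to that boundary and hence bounds such a disc. Conversely, if $\alpha$ bounds a disc with exactly two order-two cone points, the orbifold fundamental group of the disc is infinite dihedral with $[\alpha]$ equal to the product $\epsilon_1\epsilon_2$ of the loops around the cone points, so $g=\Phi([\alpha])=e_1e_2$ and $\mathcal{A}_g$ passes through the two order-two fixed points. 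This establishes the stated dichotomy.

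The main obstacle I anticipate is making the linking/endpoint argument of the first paragraph fully rigorous in the orbifold setting: one must verify that each lift $\widetilde{\alpha}$ is a genuine quasigeodesic with well-defined endpoints equal to the fixed points of the stabilizing conjugate of $g$, and that disjointness of the lifts genuinely precludes linking at infinity. The second delicate point is the clean identification $[\alpha]=e_1e_2$ in both directions of the final equivalence, which relies on recognizing the regular neighborhood of the folded geodesic as a two-cone-point disc and invoking uniqueness of simple representatives in a free homotopy class.
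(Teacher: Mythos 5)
This proposition is not proved in the paper at all: it is imported verbatim from Maskit (\cite{BM}, Chapter V, Proposition G.9) and used as a black box, so there is no in-paper argument to measure yours against. Judged on its own terms, your reconstruction is the standard proof and is correct in outline. The lifting/linking argument is sound: a fundamental segment of $\widetilde{\alpha}$ is compact, so $\widetilde{\alpha}$ stays within bounded Hausdorff distance of $\mathcal{A}_g$ and has the fixed points of $g$ as endpoints; disjointness of the components of $\pi^{-1}(\alpha)$ then unlinks the endpoint pairs of distinct conjugate axes and kills all transverse crossings, including those occurring at elliptic fixed points, which is exactly what your order-$\geq 3$ exclusion requires. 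The dihedral computation $(e_1g)^2=I$, $g=e_1e_2$, and the identification of $\pi(\mathcal{A}_g)$ with the doubly traversed segment are also right, since the segment from $p_2$ to $p_1$ is a fundamental domain for $\langle e_1,e_2\rangle$ acting on the axis. Beyond the two obstacles you flag yourself, there is one gap you do not mention: to conclude that the geodesic folds \emph{only} at $p_1$ and $p_2$ and that the disc contains \emph{exactly} two cone points, you must exclude a further order-two fixed point on $\mathcal{A}_g$ strictly between them. Such a point would produce a hyperbolic element with axis $\mathcal{A}_g$ and translation length strictly less than $\mathcal{T}_g$, making $g$ a proper power of a hyperbolic stabilizing $\widetilde{\alpha}$ and forcing $\alpha$ to multiply cover an embedded circle, contradicting simplicity; this primitivity step needs to be said. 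Finally, the isotopy statement you invoke in the forward direction of the dichotomy (freely $\mathbb{H}/\Gamma$-homotopic essential simple loops disjoint from the orbifold points cobound an annulus in the complement of the orbifold points) is true but is itself of roughly the same depth as the proposition being proved, so it should either be justified or replaced by a direct argument that $\alpha$ bounds the regular neighborhood of the folded geodesic. With those repairs the proof is complete, and it is a faithful, self-contained substitute for the citation.
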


\indent It is a well-established fact that there is a non-elementary finitely generated Fuchsian group $\Gamma$ with the signature $(g:m_1,\dots,m_c;n;b)$ if and only if
\begin{equation}
\label{siginequal}
2g-2+n+b+\displaystyle\sum_{j=1}^c\left(1-\frac{1}{m_j}\right)>0.
\end{equation}

\begin{defi}
Let $\Gamma$ be a Fuchsian group.  The 2-orbifold $\mathbb{H}/\Gamma$ is called a \textit{generalized pair of pants} if $\Gamma$ has signature $(0:m_1,\dots,m_c;n;b)$ satisfying (\ref{siginequal}) 
and $c+n+b=3$.
\end{defi}
Thus triangle group orbifolds, as well as a usual pair of pants (a sphere with three boundary components), are examples of generalized pairs of pants.\\

\begin{prop}
\label{fig8pants}
A figure eight geodesic on a 2-orbifold without orbifold points of order two is contained in a generalized pair of pants.
\end{prop}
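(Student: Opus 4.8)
The plan is to pass to a regular neighborhood of the figure eight geodesic, identify it topologically, and then cap off or cut along the boundary curves to produce a generalized pair of pants.

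Let $\gamma$ be the figure eight geodesic on $S=\mathbb{H}/\Gamma$. Since a closed geodesic realizes the minimal self-intersection number in its free homotopy class, $\gamma$ has exactly one self-intersection point $P$, and I write $\gamma=\delta_1*\delta_2$, where $\delta_1$ and $\delta_2$ are the two sub-loops based at $P$; each $\delta_i$ is a simple loop and $\delta_1\cap\delta_2=\{P\}$. As $\gamma$ is disjoint from the orbifold points, I may choose a regular neighborhood $N$ of $\gamma$ that is also disjoint from them.

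The first key step is to show that $N$ is a pair of pants. Since $N$ deformation retracts onto the wedge $\delta_1\vee\delta_2$ we have $\chi(N)=-1$, and as $S$ is orientable, $N$ is either a pair of pants or a one-holed torus. I would rule out the latter: in that configuration $\delta_1$ and $\delta_2$ are two simple closed curves meeting once and generating the torus, and then $\gamma\simeq\delta_1\delta_2$ would be freely homotopic to a simple closed curve (the product of two simple closed curves meeting once on a torus is simple), forcing the geodesic in its class to be simple and contradicting that $\gamma$ has one self-intersection; the degenerate possibilities, where the commutator $[\delta_1,\delta_2]$ is inessential, are excluded because they would make $\delta_1,\delta_2$ share an axis and $\gamma$ imprimitive. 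Hence $N$ is a pair of pants, with boundary curves $b_1,b_2,b_3$ freely homotopic to $\delta_1$, $\delta_2$, and $\delta_1\delta_2^{-1}$ respectively (the third class must be $\delta_1\delta_2^{-1}$ rather than $\gamma\simeq\delta_1\delta_2$, as $b_3$ is simple). I expect this dichotomy to be the main obstacle.

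Next I analyze the three boundary curves, and this is where the absence of order-two points is used. Each $b_i$ is a simple loop disjoint from the orbifold points, and each is essential: if $b_1$ or $b_2$ bounded a disc then $\gamma$ would be freely homotopic to a simple loop, and if $b_3$ bounded a disc then $\delta_1,\delta_2$ would be freely homotopic and $\gamma$ imprimitive. By Theorem \ref{uniquegeodthm}, the class of each $b_i$ is hyperbolic, parabolic, or elliptic. If parabolic, then $b_i$ bounds a once-punctured disc; if elliptic, then $b_i$ bounds a disc containing a single orbifold point, necessarily of order at least three since there are none of order two. If hyperbolic, then $b_i$ is freely homotopic to a closed geodesic, and Proposition \ref{Maskitlem}, applied using the hypothesis that $S$ has no orbifold points of order two, guarantees that this geodesic $c_i$ is simple; moreover $c_i$ is disjoint from $\gamma$, since the geometric intersection number $i(b_i,\gamma)=0$.

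Finally I assemble the generalized pair of pants $\Sigma\supseteq N$. For each boundary curve that bounds a once-punctured disc or a disc with a single orbifold point, I glue that disc to $N$, converting the boundary into a puncture or an orbifold point; for each boundary curve of hyperbolic type I cut $S$ along the disjoint simple geodesic $c_i$ and keep the resulting geodesic boundary component. Because $b_1,b_2,b_3$ are the boundary of the embedded pair of pants $N$, their outer complementary regions are disjoint, so these operations are independent and leave the genus equal to zero. The result is a genus-zero sub-orbifold carrying exactly three features — punctures, orbifold points of order at least three, and boundary components totalling $c+n+b=3$ — hence a generalized pair of pants; it satisfies the signature inequality (\ref{siginequal}) as a subsurface of the hyperbolic orbifold $S$, and it contains $N\supseteq\gamma$, as required.
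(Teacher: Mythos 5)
Your argument is correct and follows essentially the same route as the paper's: decompose $\gamma$ at its crossing into the two simple sub-loops, identify the third relevant class as $\delta_1\delta_2^{-1}$ (the paper's $fg^{-1}$), and apply Theorem \ref{uniquegeodthm} together with Proposition \ref{Maskitlem} and the absence of order-two points to each of the three classes. Your regular-neighborhood/Euler-characteristic step ruling out the one-holed torus is extra topological justification that the paper leaves implicit (and is in fact automatic, since transversality of the self-intersection forces the two half-edges of each lobe to be adjacent in the cyclic order at the crossing, which already yields the pair of pants), but it does not change the substance of the argument.
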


\begin{proof}
Suppose $\Gamma$ is a Fuchsian group and let $\gamma$ be a figure eight geodesic on $\mathbb{H}/\Gamma$.  Choose the point of self-intersection of $\gamma$, say $x$, to be the base point of $\mathbb{H}/\Gamma$.  Then we can write $\gamma=\alpha*\beta$, where $\alpha$ and $\beta$ are the two component simple loops of $\gamma$ based at $x$.  Each of $\alpha$ and $\beta$ either bound an orbifold point, bound a puncture, or are freely $\mathbb{H}/\Gamma$-homotopic to a simple closed geodesic (Theorem \ref{uniquegeodthm}).  Note that neither $\alpha$ nor $\beta$ can be freely $\mathbb{H}/\Gamma$-homotopic to the geodesic path between two orbifold points of order two (see Proposition \ref{Maskitlem}) because $\mathbb{H}/\Gamma$ does not contain any orbifold points of order two.  Let $\Phi:\Pi_1(\mathbb{H}/\Gamma,x)\rightarrow\Gamma$ be the corresponding isomorphism from Theorem \ref{isomorphthm} and assume $\Phi([\alpha])=f$ and $\Phi([\beta])=g$.  Thus, $\Phi([\gamma])=fg$.  Then, the conjugacy class of $fg^{-1}$ represents a free $\mathbb{H}/\Gamma$-homotopy class containing a simple loop, which either bounds an orbifold point, bounds a puncture, or is freely $\mathbb{H}/\Gamma$-homotopic to a simple closed geodesic (based on whether $fg^{-1}$ is respectively, elliptic, parabolic, or hyperbolic).  Thus $\gamma$ is contained in a generalized pair of pants with the three components being the projections of the fixed point (if elliptic or parabolic) or the axis (if hyperbolic) of $f$, $g$, and $fg^{-1}$.
\end{proof}\\

\begin{proof}[Proof of Theorem 1.2] Suppose $\Gamma$ is a Fuchsian group and let $\gamma$ be a figure eight geodesic on $\mathbb{H}/\Gamma$.  By Proposition \ref{fig8pants}, $\gamma$ is contained in a generalized pair of pants $P$ within $\mathbb{H}/\Gamma$.  If $P$ has no boundary components, then $P=\mathbb{H}/\Gamma$ is a triangle group orbifold and so the result follows from Theorem \ref{shortfig8tg}. So suppose $P$ contains at least one boundary component.  As we decrease the length of the boundary components of $P$ so they become punctures, the lengths of loops on $P$ will decrease (see \cite{WT}, Lemma 3.4).  Thus, the smallest figure eight geodesic cannot be contained in a generalized pair of pants with a boundary component.  It follows that it must lie on a triangle group orbifold and so again the result follows from Theorem \ref{shortfig8tg}.
\end{proof}

Table \ref{tab1} compares known results with the main result of this note, Theorem \ref{shortfig8ho}.\\

\begin{table}
\centering
\caption{Minimal length non-simple closed geodesics}
\hspace*{-2cm}
\begin{tabular}{| >{\centering\arraybackslash}m{3.1cm} |>{\centering\arraybackslash}m{3cm} |c|>{\centering\arraybackslash}m{2cm} |c|>{\centering\arraybackslash}m{1.2cm} |}
 \hline
 \small{Classification} & \small{Closed Geodesic(s)} & $|$trace$|$ & \small{Hyperbolic length} & \small{Orbifold} & \small{Area of Orbifold} \\[.1in]\hline
   \small{Shortest non-simple closed geodesic on a 2-orbifold (\cite{TN,CPNP,RV})} & \small{Passes through order two orbifold point (Figure \ref{(2,3,7)})} & $2\cos(\frac{2\pi}{7})+1=2.24698\dots$ & 0.98399$\dots$ & $\mathcal{O}(2,3,7)$ & $\frac{\pi}{21}$ \\ \hline
   \small{Shortest figure eight geodesic on a 2-orbifold without order two orbifold points (Theorem \ref{shortfig8ho})} & \small{Figure eight geodesic bounding orbifold points of order three} & $1+\sqrt{2}=2.41421\dots$ & $1.26595\dots$ & $\mathcal{O}(3,3,4)$ & $\frac{\pi}{6}$ \\\hline
   \small{Shortest non-simple closed geodesic on a 2-manifold (\cite{AY0,AY})} & \small{All three figure eight geodesics} & 6 & 3.52549$\dots$ & $\mathcal{O}(\infty,\infty,\infty)$ & $2\pi$ \\ \hline

  \hline
\end{tabular}\hspace*{-2cm}
\label{tab1}
\end{table}

\begin{ack}
The author would like to thank his advisor, Ara Basmajian, for his encouragement and guidance throughout the writing of this work.  In addition, the author thanks Moira Chas for helpful conversations with regards to this work. The results of this article are contained in the Ph.D. Thesis \cite{RSV}.
\end{ack}

\end{document}